\lstdefinelanguage{GAP}{%
    morekeywords=[2]{and,break,continue,do,elif,else,end,fail,false,fi,for,%
        function,if,in,local,mod,not,od,or,rec,repeat,return,then,true,%
        until,while, %
        AffineSemigroup, Difference, List, Intersection, CatenaryDegreeOfElementInNumericalSemigroup, TameDegreeOfElementInNumericalSemigroup, NumericalSemigroup,IsEmpty, Filtered, ForAny, Gaps, Generators, Rank, Minimum},%
    sensitive=true,%
    morecomment=[l]\#,%
    morestring=[b]',%
    morestring=[b]",%
    basicstyle=\ttfamily,
    }%
\newtheorem{theorem}{Theorem}
\newtheorem{proposition}[theorem]{Proposition}
\newtheorem{lemma}[theorem]{Lemma}
\newtheorem{corollary}[theorem]{Corollary}
\theoremstyle{definition}
\newtheorem{definition}[theorem]{Definition}
\newtheorem{remark}[theorem]{Remark}
\newtheorem{example}[theorem]{Example}
\begin{document}

\title[Algorithms for Generalized Numerical Semigroups]{Algorithms for Generalized Numerical Semigroups}

\author[Cisto]{Carmelo Cisto}
\address{Universit\'{a} di Messina, Dipartimento di Matematica e Informatica, Viale Ferdinando Stagno D'Alcontres 31, 98166 Messina, Italy}
\email{carmelo.cisto@unime.it}

\author[Delgado]{Manuel Delgado}
\address{CMUP, Departamento de Matem\'atica, Faculdade de
	Ci\^encias, Universidade do Porto, Rua do Campo Alegre 687, 4169-007 Porto,
	Portugal} 
\email{mdelgado@fc.up.pt} 

\author[García-Sánchez]{Pedro A. García-Sánchez}
\address{Departamento de Álgebra, Facultad de Ciencias, Campus Fuentenueva s/n, 18071 Granada, España}
\email{pedro@ugr.es}

\thanks{
All the authors acknowledge partial support by the project MTM2017-84890-P, which is funded by Ministerio de Economía y Competitividad and Fondo Europeo de Desarrollo Regional FEDER.
The second and third authors acknowledge partial support by CMUP (UID/MAT/00144/2013 and UID/MAT/00144/2019), which is funded by FCT (Portugal) with national (MEC) and European structural funds through the programs FEDER, under the partnership agreement PT2020.
The second author acknowledges a sabbatical grant from the FCT: SFRH/BSAB/142918/2018.
}

\subjclass{20M14, 05A15, 11D07,20-04}

\keywords{Generalized numerical semigroup, gaps, minimal generators, genus}

\begin{abstract}

We provide algorithms for performing computations in generalized numerical semigroups, that is, submonoids of $\mathbb{N}^{d}$ with finite complement in $\mathbb{N}^{d}$. These semigroups are affine semigroups, which in particular implies that they are finitely generated. For a given finite set of elements in $\mathbb{N}^d$ we show how to deduce if the monoid spanned by this set is a generalized numerical semigroup and, if so, we calculate its set of gaps. Also, given a finite set of elements in $\mathbb{N}^d$ we can determine if it is the set of gaps of a generalized numerical semigroup and, if so, compute the minimal generators of this monoid. 
We provide a new algorithm to compute the set of all generalized numerical semigroups with a prescribed genus (the cardinality of their sets of gaps). Its implementation allowed us to compute (for various dimensions) the number of numerical semigroups for genus that had not been attained before.
\end{abstract}


\maketitle

\section{Introduction}
Let \(d\) be a positive integer.
A generalized numerical semigroup is a submonoid of $\mathbb{N}^{d}$ with finite complement in $\mathbb{N}^{d}$, where by $\mathbb{N}$ we mean the set of non negative integers. Submonoids of $\mathbb{N}^d$ have been studied by many authors, and in particular those that are finitely generated, which are known in the literature as affine semigroups.

Generalized numerical semigroups were studied in \cite{failla2016algorithms} as a straightforward generalization of the well known concept of numerical semigroup, that is, a submonoid of $\mathbb{N}$ with finite complement in $\mathbb{N}$. If $S$ is a submonoid of $\mathbb{N}^{d}$ with finite complement in $\mathbb{N}^{d}$, then the elements in $\operatorname{H}(S)=\mathbb{N}^{d}\setminus S$ are the \emph{gaps} of $S$, and the \emph{genus} of $S$ is $\operatorname{g}(S)=|\operatorname{H}(S)|$. In \cite{failla2016algorithms}, a procedure to compute the set of generalized numerical semigroups in $\mathbb{N}^d$ with genus $g$ is presented; the cardinality of this set is denoted by $N_{g,d}$. Also it is shown that for fixed genus $g$, the map $d\mapsto N_{g,d}$ is a polynomial of degree~$g$. Other asymptotic properties are presented and several interesting conjectures are raised. In \cite{garcia2018extension}, the number $N_{g,d}$ is computed for several (low) values, and the concept of generalized numerical semigroup is extended to affine semigroups having finite complement in their associated spanning cones. In \cite{Analele} (finite) sets generating generalized numerical semigroups are characterized, and a procedure to compute the set of gaps is presented. 

The aim of this manuscript is to present procedures to determine if a given set of elements in $\mathbb{N}^d$ generates a generalized numerical semigroup and, if so, compute the set of gaps of the resulting monoid. Dually, we present a procedure to determine if a finite set of points in $\mathbb{N}^d$ represents the set of gaps of an affine semigroup, and then we give a method for constructing the minimal generating set of this affine semigroup. We use then these procedures to produce the set of generalized numerical semigroups with given genus. 
Careful implementations of the mentioned procedures have been made in order to make them usable in practice. 
A section is devoted to these implementations and the new data obtained from them.

The \texttt{GAP} \cite{GAP} package  \texttt{numericalsgps} \cite{numericalsgps} offers tools to deal with numerical and affine semigroups. The possibility of having fast methods to pass from generators to gaps and vice versa allows to describe an affine semigroup by its set of gaps (in the case this set is finite). Membership in a generalized numerical semigroup for which the gaps are known is then trivial, in contrast to membership for an affine semigroup for which only a set of generators is known. Also given an affine semigroup defined by a set of generators one may wonder if it is a generalized numerical semigroup, and thus compute its gaps. Benefiting of the features of the computer algebra system \texttt{GAP}, one can, once computed, add the set of gaps as an attribute of the semigroup (which in particular makes, from that moment on, the membership problem trivial during the \texttt{GAP} session). This opens the possibility of defining new methods for affine semigroups once their gaps are known. In particular, this provides ways to compute special gaps and pseudo-Frobenius elements (the later terminology was used in~\cite{2019arXiv190311028G}). The implementations of these methods is available in \cite{numericalsgps}, in particular in the files \texttt{affine.*} and \texttt{afine-def.*}. The documentation produced for these functions can be consulted in the \texttt{GAP} help system or in the manual of the package.


The paper is structured as follows. In Section~\ref{sec:basics}, we summarize the main definitions and properties concerning generalized numerical semigroups that will be used in the algorithms presented in this manuscript. Sections~\ref{sec:first-alg},~\ref{sec:min-gens} and~\ref{sec:gaps} are devoted to describe three algorithms, one to compute the set of all generalized numerical semigroups of given genus, another to calculate the set of gaps from a minimal generating set of a generalized numerical semigroup, and a procedure to compute the minimal set of generators of a generalized numerical semigroup, once its gaps are known. In Section~\ref{sec:alt-alg}, we provide a new and alternative way to produce all generalized numerical semigroups of given genus, different from that of Section~\ref{sec:first-alg}. Then we have two sections with a rather experimental flavor. First we comment on the implementations of the algorithms; while in the last we gather some computational results obtained.  

\section{Basics on Generalized numerical semigroups}\label{sec:basics}

It is known that every submonoid $S$ of $\mathbb{N}^{d}$ admits a unique minimal set of generators $\mathcal{A}(S)$, that is, every element in $S$ is a linear combination of elements in $\mathcal{A}(S)$ with coefficients in $\mathbb{N}$ ($S=\langle \mathcal{A}(S)\rangle$), and no proper subset of $\mathcal{A}(S)$ has this property. It is easy to see that, if $S\subseteq \mathbb{N}^{d}$ is a monoid, the unique minimal set of generators is $\mathcal{A}(S)=S^{*}\setminus(S^{*}+S^{*})$,  where $S^{*}=S\setminus\{0\}$. This set is not always finite, and submonoids $S\subseteq \mathbb{N}^{d}$ for which $\mathcal{A}(S)$ is finite are called \emph{affine semigroups}.

We recall that a \emph{numerical semigroup} is a submonoid $S$ of $\mathbb{N}$ such that $\mathbb{N}\setminus S$ is a finite set. The elements of $\operatorname{H}(S)=\mathbb{N}\setminus S$ are called the \emph{gaps} of $S$ (in some places they are known as \emph{holes}) and the largest integer not in $S$ is the \emph{Frobenius number} of $S$, denoted by $\operatorname{F}(S)$. The number $g=|\operatorname{H}(S)|$ is the \emph{genus} of $S$. 

It is well known that the minimal set of generators of every numerical semigroup $S$ is finite. The elements of $\mathcal{A}(S)$ are sometimes known as the \emph{atoms} (irreducibles, primitive elements) of $S$ or simply minimal generators. Moreover, the fact that $\mathbb{N}\setminus S$ is finite is equivalent to impose that the greatest common divisor of its minimal generating set is one. 
For an introduction to numerical semigroups please refer to \cite{rosales2009numerical}, and also to \cite{ns-app}, where it is explained where the names used above come from. 

 Among affine semigroups, we are interested in \emph{generalized numerical semigroups}, which are submonoids in $\mathbb{N}^{d}$, with $d\geq 2$, and finite complement in $\mathbb{N}^{d}$  (see \cite{failla2016algorithms} and \cite{Analele}). 
 The notions of \emph{gaps} and $\emph{genus}$ can be then stated for generalized numerical semigroups as for numerical semigroups.

The most important differences between numerical semigroups and generalized numerical semigroups consist in the identification of a Frobenius element and those generators larger than this element: $\mathbb{N}$ has a natural total order while $\mathbb{N}^{d}$ has only a natural partial order (that is induced by the total order in $\mathbb{N}$). The notions of Frobenius element and generators greater than the Frobenius element provide a way to build the semigroup tree of all numerical semigroups up to a given genus (see \cite{bras2009bounds}). The corresponding notions for generalized numerical semigroups are given in \cite{failla2016algorithms} by introducing particular total orders in $\mathbb{N}^{d}$, called \emph{relaxed monomial orders}.

\begin{definition} 
A total order, $\preceq$, on the elements of $\mathbb N^d$ is called a \emph{relaxed monomial order} if it satisfies:
\begin{itemize}
\item[i)] if $\mathbf{v},\mathbf{w}\in \mathbb N^d$ and if $\mathbf{v}\preceq \mathbf{w}$, then $\mathbf{v}\preceq \mathbf{w}+\mathbf{u}$ for all $\mathbf{u}\in \mathbb N^d$;
\item[ii)] if $\mathbf{v}\in \mathbb N^d$ and $\mathbf{v}\neq \mathbf{0}$, then $\mathbf{0}\preceq \mathbf{v}$.
\end{itemize}
\end{definition}

\begin{example} \rm 
Among the relaxed monomial orders there are the well known \emph{monomial orders} as defined, for instance, in \cite[Section2]{cox2007ideals}. A classical example is the following.  Let $\alpha,\beta \in \mathbb{N}^{d}$, we define $\alpha\preceq \beta$ if and only if  the first nonzero coordinate of $\beta-\alpha$, starting from the left, is positive. The relation $\preceq$ is called \emph{lexicographic order} and it is a relaxed monomial order.

Not all relaxed monomial orders are monomial orders. Take, for instance, 
$\le_{m}$ to be an order on $\mathbb{N}^d$ induced by a monomial order. We define $\mathbf{u}\preceq \mathbf{v}$ if
\begin{itemize}
\item $\min(\mathbf{u})< \min(\mathbf{v})$ or if
\item $\min(\mathbf{u})= \min(\mathbf{v})$ and $\mathbf{u}\le_{m}\mathbf{v}$.
The order $\preceq$ is a relaxed monomial order that is not a monomial order. For instance, in $\mathbb{N}^{3}$ we have $(3,1,1)\preceq (2,7,8)$.  However, $(3,1,1)+(0,2,2)\npreceq (2,7,8)+(0,2,2)$, that is, $\preceq$ is not a monomial order. 
\end{itemize}
\end{example}

Following the proof of Corollary 6 in \cite[Section 2.4]{cox2007ideals}, we can easily see that a relaxed monomial order in $\mathbb{N}^{d}$ is a well-ordering. So it is possible to state the following general definition.

\begin{definition}
Let $S$ be a submonoid of $\mathbb{N}^{d}$ and $\preceq$ a relaxed monomial order. We define
$\mathbf{m}_{\preceq}(S)$ the smallest element of $S\setminus\{\mathbf{0}\}$ with respect to $\preceq$, and it is called \emph{multiplicity} of $S$ with respect to~$\preceq$.
\end{definition}

For what concerns the notions of Frobenius element and generators greater than the Frobenius element we follow \cite{failla2016algorithms}. 
It is worth to mention that the same definition of Frobenius element is stated for every submnoids $S$ of $\mathbb{N}^{d}$ in \cite{2019arXiv190311028G}, with the difference that the existence of a Frobenius element is not always guaranteed. In particular, the fact that $S$ is a generalized numerical semigroup is a sufficient condition for the existence of a Frobenius element. A different definition, also inspired in the numerical semigroup setting, is that of \emph{Frobenius vector}, as presented in \cite{assi2015frobenius}. 

\begin{definition} Let $S\subseteq \mathbb{N}^{d}$ be a generalized numerical semigroup. Given a relaxed monomial order $\preceq$ in $\mathbb{N}^{d}$, we define:
\begin{enumerate} 
\item $\mathbf{F}_{\preceq}(S)$ to be the greatest element in $\operatorname{H}(S)$ with respect to $\preceq$, called \emph{Frobenius element} of $S$ with respect to $\preceq$ (we write $\mathbf{F}_\preceq(\mathbb{N}^d)=(-1,\ldots,-1)$ for convenience);
\item $\operatorname{U}_\preceq(S)$ to be the set of minimal generators greater than $\mathbf{F}_{\preceq}(S)$ 
with respect to $\preceq$.
\end{enumerate}
\end{definition}

Given a relaxed monomial order in $\mathbb{N}^{d}$, it is possible to arrange the set $\mathcal{S}_{d}$ of all generalized numerical semigroups in $\mathbb{N}^{d}$ as a rooted tree $\mathcal{T}_{\preceq}$, with root in $\mathbb{N}^{d}$ (in the same way as for numerical semigroups, see ~\cite{bras2009bounds}). In particular, we can write an algorithm that provides all generalized numerical semigroups in $\mathbb{N}^{d}$ of a given genus $g$. The following result, easy to prove and stated in a more general setting, is useful in this context. 

\begin{proposition} 
Let $S\subseteq \mathbb{N}^{d}$ be an affine semigroup, and let $\mathbf{v}\in \mathcal{A}(S)$. Then $S\setminus \{\mathbf{v}\}$ is an affine semigroup and it is generated by $\left(\mathcal{A}(S)\setminus \{\mathbf{v}\}\right)\cup \{\mathbf{g}+\mathbf{v}\mid \mathbf{g}\in \mathcal{A}(S)\setminus \{\mathbf{v}\}\}\cup \{2\mathbf{v},3\mathbf{v}\}$. 
\label{gener}
\end{proposition}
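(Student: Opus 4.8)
The plan is to set $T = S \setminus \{\mathbf{v}\}$ and write $G$ for the proposed generating set, and to prove the two inclusions $\langle G\rangle \subseteq T$ and $T \subseteq \langle G\rangle$ separately, after first checking that $T$ is itself a submonoid. The only structural fact I will use about $\mathbf{v}$ is that, being a minimal generator, it is nonzero and cannot be written as a sum of two nonzero elements of $S$; that is, $\mathbf{v} \notin S^{*}+S^{*}$ by the description $\mathcal{A}(S)=S^{*}\setminus(S^{*}+S^{*})$.

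First I would verify that $T$ is a submonoid of $\mathbb{N}^{d}$. Clearly $\mathbf{0}\in T$ since $\mathbf{v}\neq \mathbf{0}$. For closure, suppose $\mathbf{a},\mathbf{b}\in T$ with $\mathbf{a}+\mathbf{b}=\mathbf{v}$. If both summands were nonzero this would exhibit $\mathbf{v}\in S^{*}+S^{*}$, contradicting minimality; and if one of them were $\mathbf{0}$ the other would equal $\mathbf{v}$, contradicting $\mathbf{a},\mathbf{b}\in T$. Hence $\mathbf{a}+\mathbf{b}\neq \mathbf{v}$, so $\mathbf{a}+\mathbf{b}\in T$. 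In particular, every element of $G$ lies in $T$: each $\mathbf{g}+\mathbf{v}$, as well as $2\mathbf{v}$ and $3\mathbf{v}$, is distinct from $\mathbf{v}$ because $\mathbf{g},\mathbf{v}\neq \mathbf{0}$. Since $T$ is closed, this gives $\langle G\rangle \subseteq T$ at once.

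The substantive direction is $T\subseteq \langle G\rangle$, and here I would argue by the multiplicity of $\mathbf{v}$ in a factorization. Take $\mathbf{t}\in T$ and, from any expression of $\mathbf{t}$ over $\mathcal{A}(S)$, write $\mathbf{t}=k\mathbf{v}+\mathbf{w}$ with $\mathbf{w}\in \langle \mathcal{A}(S)\setminus\{\mathbf{v}\}\rangle$ and $k\in \mathbb{N}$ the coefficient of $\mathbf{v}$. If $k=0$ then $\mathbf{t}=\mathbf{w}\in \langle G\rangle$. If $k=1$, then $\mathbf{t}\neq \mathbf{v}$ forces $\mathbf{w}\neq \mathbf{0}$ (a nonnegative combination of nonzero vectors of $\mathbb{N}^{d}$ vanishes only when trivial), so some $\mathbf{g}\in \mathcal{A}(S)\setminus\{\mathbf{v}\}$ occurs in $\mathbf{w}$; writing $\mathbf{t}=(\mathbf{v}+\mathbf{g})+(\mathbf{w}-\mathbf{g})$ expresses $\mathbf{t}$ via the generator $\mathbf{v}+\mathbf{g}\in G$ and a remainder in $\langle \mathcal{A}(S)\setminus\{\mathbf{v}\}\rangle$. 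Finally, for $k\geq 2$ I would invoke that $\langle 2,3\rangle$ is the numerical semigroup with Frobenius number $1$, so every integer $k\geq 2$ satisfies $k=2a+3b$ for some $a,b\in \mathbb{N}$; then $k\mathbf{v}=a(2\mathbf{v})+b(3\mathbf{v})\in \langle G\rangle$, whence $\mathbf{t}\in \langle G\rangle$.

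This exhausts all cases and yields $T=\langle G\rangle$, which is finitely generated and hence an affine semigroup. The one point needing genuine care — and the main obstacle — is the interplay between the two families of auxiliary generators: the vectors $\mathbf{v}+\mathbf{g}$ are exactly what disposes of a lone copy of $\mathbf{v}$ in the case $k=1$, using crucially that $\mathbf{t}\neq \mathbf{v}$, whereas $2\mathbf{v}$ and $3\mathbf{v}$ together handle all higher multiplicities precisely because $2$ and $3$ cover every integer from $2$ onward. I would make sure to highlight that the case $k=1$ cannot be absorbed by $2\mathbf{v},3\mathbf{v}$ alone, which is the reason the generators $\mathbf{v}+\mathbf{g}$ are indispensable.
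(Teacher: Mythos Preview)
Your proof is correct. The paper does not actually prove this proposition; it simply states it as ``easy to prove and stated in a more general setting,'' so your argument supplies what the paper omits. The case split on the multiplicity $k$ of $\mathbf{v}$ in a factorization, together with the observation that $\langle 2,3\rangle=\mathbb{N}\setminus\{1\}$, is exactly the natural route, and your handling of the $k=1$ case via the auxiliary generators $\mathbf{v}+\mathbf{g}$ is clean.
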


\begin{remark} 
The system of generators described in Proposition~\ref{gener} does not have to be a minimal generating system.
\end{remark}

\begin{example}  Let $S=\mathbb{N}^{2}\setminus\{(1,0)\}$. Then $\mathcal{A}(S)=\{(2,0),(3,0),(1,1),(0,1)\}$. Let $\preceq$ be the lexicographic order. So $\mathbf{F}_{\preceq}(S)=(1,0)$, and $(1,1)\in\operatorname{U}_\preceq(S)$. Let $S'=S \setminus  \{(1,1)\}= \mathbb{N}^{2} \setminus \{(1,0),(1,1)\}$. By Proposition~\ref{gener},  $\{(2,0),(3,0),(0,1),(3,1),(4,1),(1,2),(2,2),(3,3)\}$ is a set generating for $S'$. However, $(3,0)+(0,1)=(3,1)$, whence $(3,1)$ is not an atom of $S'$. 
\end{example}

Observe that from the definition of relaxed monomial order, for a given submonoid $S\subseteq \mathbb{N}^d$ and a given relaxed monomial order $\preceq$, the multiplicity of $S$ with respect to $\preceq$ is a minimal generator of $S$. 
Due to its relevance in this work, we state this fact as a lemma.

\begin{lemma}
Let $S$ be a submonoid of $\mathbb{N}^d$, and let $\preceq$ be relaxed monomial order on $\mathbb{N}^d$. Then $\mathbf{m}_\preceq(S)\in \mathcal{A}(S)$.
\end{lemma}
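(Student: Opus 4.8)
The plan is to argue by contradiction using the characterization $\mathcal{A}(S)=S^{*}\setminus(S^{*}+S^{*})$ recalled at the start of this section, where $S^{*}=S\setminus\{\mathbf{0}\}$. Write $\mathbf{m}=\mathbf{m}_\preceq(S)$. Since $\mathbf{m}$ is the smallest element of $S\setminus\{\mathbf{0}\}$, it certainly lies in $S^{*}$, so the only way it can fail to be a minimal generator is that $\mathbf{m}\in S^{*}+S^{*}$. I would therefore assume $\mathbf{m}=\mathbf{a}+\mathbf{b}$ for some $\mathbf{a},\mathbf{b}\in S^{*}$ and derive a contradiction with the minimality of $\mathbf{m}$.

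The key step is to use the two axioms of a relaxed monomial order to show $\mathbf{a}\preceq \mathbf{m}$. Starting from the reflexive relation $\mathbf{a}\preceq \mathbf{a}$, I would apply axiom (i) with $\mathbf{v}=\mathbf{w}=\mathbf{a}$ and $\mathbf{u}=\mathbf{b}$ to obtain $\mathbf{a}\preceq \mathbf{a}+\mathbf{b}=\mathbf{m}$. On the other hand, because $\mathbf{a}\in S^{*}$ and $\mathbf{m}$ is by definition the smallest nonzero element of $S$, we have $\mathbf{m}\preceq \mathbf{a}$. Since $\preceq$ is a total order, hence antisymmetric, these two inequalities force $\mathbf{a}=\mathbf{m}$, and consequently $\mathbf{b}=\mathbf{m}-\mathbf{a}=\mathbf{0}$. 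This contradicts $\mathbf{b}\in S^{*}$, so no such decomposition exists and $\mathbf{m}\in \mathcal{A}(S)$.

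I do not expect a genuine obstacle here; the argument is short once the right characterization of $\mathcal{A}(S)$ is in hand. The only points requiring care are purely formal: invoking the correct axiom of a relaxed monomial order (it is axiom (i), applied with both comparands equal to $\mathbf{a}$, rather than axiom (ii)), and using that $\preceq$ is a genuine total order so that antisymmetry is available to conclude $\mathbf{a}=\mathbf{m}$. Note that axiom (ii) is not strictly needed for this particular deduction, although it is what guarantees that $\mathbf{m}$ is well defined as the $\preceq$-minimum of $S^{*}$ together with the well-ordering property established just before the statement.
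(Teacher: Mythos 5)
Your proof is correct and takes exactly the route the paper intends: the authors state this lemma without proof, remarking only that it is immediate ``from the definition of relaxed monomial order,'' and your argument---writing $\mathbf{m}_\preceq(S)=\mathbf{a}+\mathbf{b}$ with $\mathbf{a},\mathbf{b}\in S^{*}$, obtaining $\mathbf{a}\preceq \mathbf{m}_\preceq(S)$ from axiom (i) and $\mathbf{m}_\preceq(S)\preceq \mathbf{a}$ from minimality, then forcing $\mathbf{b}=\mathbf{0}$ by antisymmetry and cancellativity of $\mathbb{N}^d$---is precisely the short verification they omit. Your side remarks (that axiom (ii) is not needed here, and that well-definedness of $\mathbf{m}_\preceq(S)$ rests on the well-ordering established just before) are also accurate.
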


As a consequence we get the following result.

\begin{corollary}
Let $S$ be a submonoid of $\mathbb{N}^{d}$, and let $\preceq$ be a relaxed monomial order in $\mathbb{N}^{d}$. Then $S\setminus \{\mathbf{m}_{\preceq}(S)\}$ is a submonoid of $\mathbb{N}^d$.
\label{RemoveMult}
\end{corollary}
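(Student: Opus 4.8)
The plan is to verify directly that $S' := S \setminus \{\mathbf{m}_\preceq(S)\}$ satisfies the two defining properties of a submonoid of $\mathbb{N}^d$: it contains $\mathbf{0}$, and it is closed under addition. Throughout I write $\mathbf{m} = \mathbf{m}_\preceq(S)$, and I will lean on the preceding Lemma, which guarantees $\mathbf{m} \in \mathcal{A}(S)$.

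First, containment of the identity is immediate: by definition $\mathbf{m}$ is the smallest nonzero element of $S$, so $\mathbf{m} \neq \mathbf{0}$, and therefore $\mathbf{0} \in S \setminus \{\mathbf{m}\} = S'$.

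The substantive point is closure. Take $\mathbf{a}, \mathbf{b} \in S'$; since $S' \subseteq S$ and $S$ is a monoid, $\mathbf{a} + \mathbf{b} \in S$, so it suffices to show $\mathbf{a} + \mathbf{b} \neq \mathbf{m}$. Suppose, aiming at a contradiction, that $\mathbf{a} + \mathbf{b} = \mathbf{m}$. By the Lemma, $\mathbf{m} \in \mathcal{A}(S)$, and recall from the characterization $\mathcal{A}(S) = S^{*} \setminus (S^{*} + S^{*})$ that a minimal generator cannot be written as a sum of two nonzero elements of $S$; that is, $\mathbf{m} \notin S^{*} + S^{*}$. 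Consequently at least one of $\mathbf{a}, \mathbf{b}$ must equal $\mathbf{0}$, and then the other equals $\mathbf{m}$, contradicting that both $\mathbf{a}$ and $\mathbf{b}$ lie in $S' = S \setminus \{\mathbf{m}\}$. Hence $\mathbf{a} + \mathbf{b} \neq \mathbf{m}$, so $\mathbf{a} + \mathbf{b} \in S'$, and $S'$ is closed under addition.

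I do not expect a genuine obstacle here; the whole content is to notice that the minimality of $\mathbf{m}$ as a generator (equivalently, its irreducibility $\mathbf{m} \notin S^{*} + S^{*}$) is precisely what forbids $\mathbf{m}$ from being recovered as a sum of two surviving elements. If one preferred to avoid citing the atom characterization, the same conclusion follows directly from the axioms of a relaxed monomial order: from $\mathbf{a} \preceq \mathbf{a}$ and axiom (i) one gets $\mathbf{a} \preceq \mathbf{a} + \mathbf{b} = \mathbf{m}$, while minimality of $\mathbf{m}$ gives $\mathbf{m} \preceq \mathbf{a}$ whenever $\mathbf{a} \neq \mathbf{0}$; antisymmetry then forces $\mathbf{a} = \mathbf{m}$ and $\mathbf{b} = \mathbf{0}$, and symmetrically. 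The only mild care needed is to dispatch the degenerate cases in which $\mathbf{a}$ or $\mathbf{b}$ is $\mathbf{0}$, which are trivial since $\mathbf{0} \in S'$.
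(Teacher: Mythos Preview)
Your proof is correct and matches the paper's intended reasoning: the paper states this result without explicit proof, simply as a consequence of the preceding Lemma that $\mathbf{m}_\preceq(S)\in\mathcal{A}(S)$, and the elementary fact (implicit in Proposition~\ref{gener}) that removing an atom preserves closure. Your alternative argument via the relaxed monomial order axioms is also valid and is in fact the direct route one would use to prove the Lemma itself.
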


If $S$ is a numerical semigroup, and we remove one of its minimal generators, then we obtain another numerical semigroup, and the genus increases by one. We can go the other way around by adding gaps to $S$. The only gaps $h$ that make $S\cup \{h\}$ a numerical semigroup are those that are pseudo-Frobenius numbers of $S$ ($h+(S\setminus\{0\})\subseteq S$) fulfilling that $2h\in S$. These gaps are known as \emph{special gaps}, and can be used to construct the set of oversemigroups of a numerical semigroup \cite{rosales2003oversemigroups}. If the generator $a$ we remove from $S$ is greater than its Frobenius number, then this generator becomes trivially the Frobenius number of $S\setminus\{a\}$. Also the Frobenius number of $S$ is a special gap of $S$ (unless $S=\mathbb{N}$). Thus adjoining $\operatorname{F}(S)$ to $S$ produces a numerical semigroup. An analogue for generalized numerical semigroups is given in the following result, that suggests the idea for a simple procedure to generate all generalized numerical semigroups in $\mathbb{N}^{d}$ of given genus $g$.
 
\begin{proposition}\emph{\cite[Proposition 4.1]{failla2016algorithms}} Let $S\subseteq\mathbb{N}^{d}$ be a generalized numerical semigroup, $\preceq$ a relaxed monomial order in $\mathbb{N}^{d}$ and $\mathbf{h}$ a minimal generator. 

\begin{itemize}

\item The set $T=S\cup \{\mathbf{F}_{\preceq}(S)\}$ is a generalized numerical semigroup, moreover $\mathbf{F}_{\preceq}(S)\in \operatorname{U}_\preceq (T)$. 

\item The set $T=S\setminus \{\mathbf{h}\}$ is a generalized numerical semigroup. Furthermore if $\mathbf{h}\in \operatorname{U}_\preceq(S)$, then $\mathbf{F}_{\preceq}(T)=\mathbf{h}$. 
\end{itemize} 
 In particular, every generalized numerical semigroups of genus $g+1$ is obtained from a generalized numerical semigroup $S$ of genus $g$, by removing an element of
$\operatorname{U}_{\preceq}(S)$.

\label{effgenprec} 
\end{proposition}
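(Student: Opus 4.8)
The plan is to reduce all three assertions to a single monotonicity property of relaxed monomial orders and then argue by inspecting gaps. First I would record the following elementary fact, which is the engine of the whole proof: for every $\mathbf{w}\in\mathbb{N}^d$ and every $\mathbf{u}\in\mathbb{N}^d\setminus\{\mathbf{0}\}$ one has $\mathbf{w}\prec \mathbf{w}+\mathbf{u}$. Indeed, reflexivity $\mathbf{w}\preceq\mathbf{w}$ together with axiom (i) gives $\mathbf{w}\preceq\mathbf{w}+\mathbf{u}$; since $\mathbf{u}\neq\mathbf{0}$ forces $\mathbf{w}+\mathbf{u}\neq\mathbf{w}$ in $\mathbb{N}^d$, antisymmetry of the total order upgrades this to a strict inequality. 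Thus adding a nonzero vector strictly increases an element with respect to $\preceq$; in particular, any vector lying strictly above $\mathbf{F}_\preceq(S)$ is not a gap and hence belongs to $S$.

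For the first item, write $\mathbf{f}=\mathbf{F}_\preceq(S)$ (assuming $S\neq\mathbb{N}^d$, so that $\mathbf{f}\in\mathbb{N}^d$ and $\mathbf{f}\neq\mathbf{0}$). To see that $T=S\cup\{\mathbf{f}\}$ is a submonoid I only need to check the new sums: for $\mathbf{b}\in S^{*}$ (and for $\mathbf{b}=\mathbf{f}$) the fact above gives $\mathbf{f}+\mathbf{b}\succ\mathbf{f}$, so $\mathbf{f}+\mathbf{b}\in S\subseteq T$, while $\mathbf{f}+\mathbf{0}=\mathbf{f}\in T$. Since $\operatorname{H}(T)=\operatorname{H}(S)\setminus\{\mathbf{f}\}$ is finite, $T$ is a generalized numerical semigroup. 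To place $\mathbf{f}$ in $\operatorname{U}_\preceq(T)$ I would check two things. Every gap of $T$ is a gap of $S$ distinct from $\mathbf{f}$, hence $\prec\mathbf{f}$, so $\mathbf{F}_\preceq(T)\prec\mathbf{f}$ (using the convention $\mathbf{F}_\preceq(\mathbb{N}^d)=(-1,\dots,-1)$ when $T=\mathbb{N}^d$). And $\mathbf{f}$ is a minimal generator of $T$: a factorization $\mathbf{f}=\mathbf{x}+\mathbf{y}$ with $\mathbf{x},\mathbf{y}\in T^{*}$ would force $\mathbf{x},\mathbf{y}\prec\mathbf{f}$ by the monotonicity fact, hence $\mathbf{x},\mathbf{y}\in S$, so $\mathbf{f}\in S^{*}+S^{*}\subseteq S$, contradicting that $\mathbf{f}$ is a gap of $S$.

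For the second item, $T=S\setminus\{\mathbf{h}\}$ is a submonoid by Proposition~\ref{gener} (removing a minimal generator), the point being that $\mathbf{h}\notin S^{*}+S^{*}$ prevents any sum of two nonzero elements of $T$ from equalling $\mathbf{h}$; as $\operatorname{H}(T)=\operatorname{H}(S)\cup\{\mathbf{h}\}$ is finite, $T$ is a generalized numerical semigroup. If moreover $\mathbf{h}\in\operatorname{U}_\preceq(S)$, then $\mathbf{h}\succ\mathbf{F}_\preceq(S)$, and since every element of $\operatorname{H}(S)$ is $\preceq\mathbf{F}_\preceq(S)\prec\mathbf{h}$, the vector $\mathbf{h}$ is the $\preceq$-largest element of $\operatorname{H}(T)$, that is, $\mathbf{F}_\preceq(T)=\mathbf{h}$. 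The concluding assertion is then immediate: given a generalized numerical semigroup $T$ of genus $g+1$, set $\mathbf{f}=\mathbf{F}_\preceq(T)$ and $S=T\cup\{\mathbf{f}\}$; by the first item $S$ is a generalized numerical semigroup with $\mathbf{f}\in\operatorname{U}_\preceq(S)$ and genus $g$, and $T=S\setminus\{\mathbf{f}\}$ exhibits $T$ as the removal of an element of $\operatorname{U}_\preceq(S)$.

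The routine closure checks aside, I expect the only genuine content to be the verification that $\mathbf{f}$ stays a minimal generator of $T$ in the first item, which is where the contradiction argument is needed, together with the careful bookkeeping of the $\mathbb{N}^d$ edge cases via the Frobenius convention; everything else follows mechanically once the strict monotonicity fact is in place.
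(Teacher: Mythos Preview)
Your argument is correct. The paper itself does not prove this proposition; it is quoted verbatim from \cite[Proposition~4.1]{failla2016algorithms} and used as a black box, so there is no in-paper proof to compare against. Your write-up supplies exactly the standard verification: the strict monotonicity $\mathbf{w}\prec\mathbf{w}+\mathbf{u}$ for $\mathbf{u}\neq\mathbf{0}$ is the only nontrivial ingredient, and from it the closure of $S\cup\{\mathbf{F}_\preceq(S)\}$, the minimality of $\mathbf{F}_\preceq(S)$ in $T$, and the identification $\mathbf{F}_\preceq(S\setminus\{\mathbf{h}\})=\mathbf{h}$ all follow as you wrote. The appeal to Proposition~\ref{gener} for the second bullet is appropriate and matches how the paper organizes the prerequisites.
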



Interesting combinatorial properties
involving the number of generalized numerical semigroups in $\mathbb{N}^{d}$ of a given genus are provided in \cite{failla2016algorithms}. If $A\subseteq \mathbb{N}^{d}$, we denote by $\mathrm{Span}_\mathbb{R}(A)$ the $\mathbb{R}$-vector space spanned by the elements of $A$. 


Let $g, d, r\in \mathbb{N}$, with $d$ and $r$ positive. 
We will use the following notation.
\begin{itemize}
\item $\mathcal{S}_{g,d}$ is the set of all generalized numerical semigroups with genus $g$ in $\mathbb{N}^{d}$; $N_{g,d}$ is the cardinality of $\mathcal{S}_{g,d}$.
\item $\mathcal{S}_{g,d}^{(r)}=\left\{S\in \mathcal{S}_{g,d}\mid  \dim(\mathrm{Span}_{\mathbb{R}}(\operatorname{H}(S)))=r\right\}$; $N_{g,d}^{(r)}$ denotes the cardinality of $\mathcal{S}_{g,d}^{(r)}$.
\end{itemize}

\begin{theorem}\cite[Proposition 5.3]{failla2016algorithms} Let $g\in \mathbb{N}$, and define \[F_{g}(x):=\sum_{i=1}^{g}N_{g,i}^{(i)}\binom{x}{i}.\]
Then $F_g(x)$ is a polynomial in  $\mathbb{Q}[x]$ with degree $g$, and $F_{g}(d)=N_{g,d}$ for all $d\in \mathbb{N}$.
\label{polinomi}
\end{theorem}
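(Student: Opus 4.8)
The plan is to reduce everything to a single combinatorial identity together with one non-vanishing check. Since $\binom{x}{i}=\frac{1}{i!}x(x-1)\cdots(x-i+1)$ is a polynomial in $\mathbb{Q}[x]$ of degree exactly $i$, the expression $F_g(x)$ is automatically a polynomial in $\mathbb{Q}[x]$ of degree at most $g$; to pin the degree down to $g$ I only need the leading coefficient $N_{g,g}^{(g)}/g!$ to be nonzero, and to prove $F_g(d)=N_{g,d}$ I will establish, for every $d\geq 1$ and every $i$,
\[
N_{g,d}^{(i)}=\binom{d}{i}\,N_{g,i}^{(i)}.
\]
Granting this, and partitioning $\mathcal{S}_{g,d}$ according to the value $r=\dim(\mathrm{Span}_{\mathbb{R}}(\operatorname{H}(S)))$ (note $r\geq 1$ as soon as $g\geq 1$, and $N_{g,d}^{(i)}=0$ once $i>d$), I obtain $N_{g,d}=\sum_{i=1}^{g}N_{g,d}^{(i)}=\sum_{i=1}^{g}\binom{d}{i}N_{g,i}^{(i)}=F_g(d)$, as desired.

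The heart of the argument is a lemma identifying the dimension of the span of the gaps with the number of \emph{effective} coordinates. Call a coordinate $j$ effective for $S$ if some gap $\mathbf{h}\in\operatorname{H}(S)$ has positive $j$-th entry, and let $I$ be the set of effective coordinates. Every gap is supported on $I$, so $\mathrm{Span}_{\mathbb{R}}(\operatorname{H}(S))\subseteq\mathbb{R}^{I}$. For the reverse inclusion I claim $\mathbf{e}_j\in\mathrm{Span}_{\mathbb{R}}(\operatorname{H}(S))$ for each $j\in I$: choose a gap $\mathbf{h}$ with $h_j\geq 1$ and consider $\mathbf{h}-\mathbf{e}_j\in\mathbb{N}^d$. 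If $\mathbf{h}-\mathbf{e}_j\notin S$, then it is a gap and $\mathbf{e}_j=\mathbf{h}-(\mathbf{h}-\mathbf{e}_j)$ lies in the span; if instead $\mathbf{h}-\mathbf{e}_j\in S$, then $\mathbf{e}_j\notin S$ (otherwise $\mathbf{h}=(\mathbf{h}-\mathbf{e}_j)+\mathbf{e}_j\in S$), so $\mathbf{e}_j$ is itself a gap. Either way $\mathbf{e}_j$ is in the span, giving $\mathrm{Span}_{\mathbb{R}}(\operatorname{H}(S))=\mathbb{R}^{I}$ and hence $\dim(\mathrm{Span}_{\mathbb{R}}(\operatorname{H}(S)))=|I|$.

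With this in hand I prove the identity by a bijection. Fix $i$ and a subset $I\subseteq\{1,\dots,d\}$ with $|I|=i$. Writing $\mathbb{N}^d=\mathbb{N}^{I}\times\mathbb{N}^{I^{c}}$, I associate to a semigroup $S\in\mathcal{S}_{g,d}^{(i)}$ with effective-coordinate set $I$ the set $S'=\{\mathbf{x}\in\mathbb{N}^{I}\mid(\mathbf{x},\mathbf{0})\in S\}$. Since all gaps of $S$ are supported on $I$, every $(\mathbf{x},\mathbf{y})$ with $\mathbf{y}\neq\mathbf{0}$ lies in $S$, and one checks directly that $S'$ is a submonoid of $\mathbb{N}^{I}\cong\mathbb{N}^{i}$ whose gap set is the projection of $\operatorname{H}(S)$; in particular $S'$ has genus $g$ and, by the lemma applied in $\mathbb{N}^{i}$, all $i$ of its coordinates are effective, so $S'\in\mathcal{S}_{g,i}^{(i)}$. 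Conversely, from any $S'\in\mathcal{S}_{g,i}^{(i)}$ one reconstructs $S=\{(\mathbf{x},\mathbf{y})\mid \mathbf{y}\neq\mathbf{0}\}\cup\{(\mathbf{x},\mathbf{0})\mid\mathbf{x}\in S'\}$, which is readily verified to be a generalized numerical semigroup in $\mathcal{S}_{g,d}^{(i)}$ with effective coordinates exactly $I$, and the two constructions are mutually inverse. Relabeling coordinates identifies the semigroups in $\mathbb{N}^{I}$ with those in $\mathbb{N}^{i}$, so for each of the $\binom{d}{i}$ choices of $I$ there are exactly $N_{g,i}^{(i)}$ such semigroups, yielding $N_{g,d}^{(i)}=\binom{d}{i}N_{g,i}^{(i)}$.

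Finally, for the degree I must check $N_{g,g}^{(g)}\neq 0$, i.e. that some generalized numerical semigroup in $\mathbb{N}^{g}$ of genus $g$ has gaps spanning $\mathbb{R}^{g}$. The set $S=\mathbb{N}^{g}\setminus\{\mathbf{e}_1,\dots,\mathbf{e}_g\}$ works: no standard basis vector is a sum of two nonzero elements of $\mathbb{N}^{g}$, so $S$ is a submonoid, its $g$ gaps $\mathbf{e}_1,\dots,\mathbf{e}_g$ span $\mathbb{R}^{g}$, and thus $S\in\mathcal{S}_{g,g}^{(g)}$. Hence $N_{g,g}^{(g)}\geq 1$, the coefficient of $x^{g}$ in $F_g$ equals $N_{g,g}^{(g)}/g!>0$, and $F_g$ has degree exactly $g$. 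The main obstacle is the lemma together with checking that the splitting $\mathbb{N}^d=\mathbb{N}^{I}\times\mathbb{N}^{I^{c}}$ carries submonoids with finite complement to submonoids with finite complement in both directions; once effective coordinates are seen to control the span, the counting is routine.
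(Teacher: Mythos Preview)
The paper does not prove this theorem: it is quoted verbatim from \cite{failla2016algorithms} and stated without proof. There is therefore no ``paper's own proof'' against which to compare.

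That said, your argument is correct and self-contained. The key lemma---that $\dim(\mathrm{Span}_{\mathbb{R}}(\operatorname{H}(S)))$ equals the number of coordinates in which some gap is nonzero---is cleanly proved by your dichotomy on $\mathbf{h}-\mathbf{e}_j$, and the resulting bijection between $\{S\in\mathcal{S}_{g,d}^{(i)}:\text{effective set}=I\}$ and $\mathcal{S}_{g,i}^{(i)}$ is checked in both directions, yielding $N_{g,d}^{(i)}=\binom{d}{i}N_{g,i}^{(i)}$. The partition $N_{g,d}=\sum_{i=1}^{g}N_{g,d}^{(i)}$ then gives $F_g(d)=N_{g,d}$, and the example $\mathbb{N}^{g}\setminus\{\mathbf{e}_1,\ldots,\mathbf{e}_g\}$ shows $N_{g,g}^{(g)}\geq 1$, forcing $\deg F_g=g$. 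One small remark: your argument, like the theorem itself, tacitly assumes $g\geq 1$; for $g=0$ the sum defining $F_0$ is empty while $N_{0,d}=1$, so the statement as written needs that restriction (or the convention that the empty sum is replaced by the constant $1$).
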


\section{A first algorithm generating all generalized numerical semigroups of a given genus}\label{sec:first-alg}

Now we can present a first algorithm that computes the number of all generalized numerical semigroup in $\mathbb{N}^{d}$ of a given genus $g$, as explained in~\cite{failla2016algorithms}. The main idea is the following: starting from the trivial generalized numerical semigroup $\mathbb{N}^{d}$ of genus $0$, whose set of generators is the standard basis of the vector space $\mathbb{R}^{d}$, denoted by $\{\mathbf{e}_{1},\ldots,\mathbf{e}_{d}\}$, we produce all generalized numerical semigroups of genus $k$ from the ones of genus $k-1$, for $k$ from $1$ up to $g$. Fixed a relaxed monomial order $\preceq$, for every generalized numerical semigroup $S$ of genus $k-1$, if $\operatorname{U}_\preceq(S)=\{\mathbf{g}_{1},\ldots,\mathbf{g}_{m}\}$, we produce the generalized numerical semigroups $S\setminus \{\mathbf{g}_{1}\},\ldots,S\setminus \{\mathbf{g}_{m}\}$, all of them with genus $k$. In this way all generalized numerical semigroups of genus $k$ are produced. If we consider the directed graph $\mathcal{T}_{\preceq}$ whose set of vertices is the set of all generalized numerical semigroups, and whose edges are the pairs $(S,S\cup \{\mathbf{F}_{\preceq}(S)\})$ (we say that is $S$ is a \emph{son} of $S\cup \{\mathbf{F}_{\preceq}(S)\}$) with $S$ a generalized numerical semigroup, $S\subsetneq \mathbb{N}^{d}$. Remark that, in $\mathcal{T}_\preceq$, the generalized numerical semigroups are produced without redundancy, that is, every generalized numerical semigroup $S\neq \mathbb{N}^{d}$ is the son of a unique generalized numerical semigroup. In fact, if $S$ is the son of both $T_{1}$ and $T_{2}$, then $T_{1}=S\cup \{\mathbf{F}_\preceq(S)\}=T_{2}$.
By using Proposition~\ref{effgenprec}, we have the following result.

\begin{theorem}
Let $\preceq$ be a relaxed monomial order. Then the graph $\mathcal{T}_{\preceq}$ is a rooted tree whose root is $\mathbb{N}^{d}$. Moreover, if $S$ is a generalized numerical semigroup, then the sons of $S$ are $S\setminus \{\mathbf{g}_{1}\},\ldots,S\setminus \{\mathbf{g}_{m}\}$ with $\operatorname{U}_{\preceq}(S)=\{\mathbf{g}_{1},\ldots,\mathbf{g}_{m}\}$, and there is a unique path in $\mathcal{T}_\preceq$ joining $S$ and $\mathbb{N}$.
\end{theorem}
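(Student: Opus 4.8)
The plan is to verify the three defining properties of a rooted tree---existence of a root, uniqueness of the parent (father) of each non-root vertex, and the fact that iterating the father map reaches the root in finitely many steps---and then to identify the sons and deduce uniqueness of the path to the root. Almost all of the semigroup-theoretic content is already packaged in Proposition~\ref{effgenprec}, so the task here is essentially combinatorial bookkeeping on top of that proposition.

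First I would observe that $\mathbb{N}^d$ is the unique vertex of genus $0$ and has no outgoing edge, so it is the natural candidate for the root. For any other generalized numerical semigroup $S$, the first item of Proposition~\ref{effgenprec} guarantees that $T=S\cup\{\mathbf{F}_\preceq(S)\}$ is again a generalized numerical semigroup, so there is exactly one edge leaving $S$, namely $(S,T)$; thus every $S\neq\mathbb{N}^d$ has a well-defined father $T$. Uniqueness of the father is immediate from the description of the edges: if $S$ is a son of both $T_1$ and $T_2$, then by definition $T_1=S\cup\{\mathbf{F}_\preceq(S)\}=T_2$. Since adjoining the single gap $\mathbf{F}_\preceq(S)$ lowers the genus by exactly one, the sequence of successive fathers $S,\,S\cup\{\mathbf{F}_\preceq(S)\},\ldots$ has strictly decreasing genus and therefore terminates after finitely many steps at the unique genus-$0$ semigroup $\mathbb{N}^d$. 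A directed graph in which every non-root vertex has a unique father and the father map reaches the root in finitely many steps is a rooted tree (the strict genus decrease also rules out cycles), which settles the first assertion; the unique path joining $S$ to $\mathbb{N}^d$ is then precisely the chain of successive fathers, and its uniqueness follows at once from uniqueness of the father at each stage.

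It remains to identify the sons of $S$. For the easy inclusion, take $\mathbf{g}_i\in\operatorname{U}_\preceq(S)$; the second item of Proposition~\ref{effgenprec} tells us that $S\setminus\{\mathbf{g}_i\}$ is a generalized numerical semigroup and that $\mathbf{F}_\preceq(S\setminus\{\mathbf{g}_i\})=\mathbf{g}_i$, whence $(S\setminus\{\mathbf{g}_i\})\cup\{\mathbf{F}_\preceq(S\setminus\{\mathbf{g}_i\})\}=S$, so $S\setminus\{\mathbf{g}_i\}$ is indeed a son of $S$. For the reverse inclusion, let $S'$ be any son of $S$, so that $S=S'\cup\{\mathbf{F}_\preceq(S')\}$ and $S'$ has genus one greater than that of $S$. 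By the last sentence of Proposition~\ref{effgenprec}, $S'$ arises from some generalized numerical semigroup $S''$ by deleting an element $\mathbf{y}\in\operatorname{U}_\preceq(S'')$, and the furthermore clause gives $\mathbf{F}_\preceq(S')=\mathbf{y}$; hence $S''=S'\cup\{\mathbf{y}\}=S'\cup\{\mathbf{F}_\preceq(S')\}=S$. Therefore $\mathbf{y}\in\operatorname{U}_\preceq(S)$ and $S'=S\setminus\{\mathbf{y}\}$, showing that every son has the claimed form.

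The only step requiring genuine care is this reverse inclusion, where one must invoke the uniqueness of the father to pin down $S''$ as $S$ itself rather than some other semigroup of the same genus; once that identification is made, everything reduces to the statements already recorded in Proposition~\ref{effgenprec}.
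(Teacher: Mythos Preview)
Your proof is correct and follows the same route as the paper, which simply declares the theorem a consequence of Proposition~\ref{effgenprec} together with the observation (made in the paragraph preceding the theorem) that if $S$ is a son of both $T_1$ and $T_2$ then $T_1=S\cup\{\mathbf{F}_\preceq(S)\}=T_2$. One small simplification: for the reverse inclusion you can avoid the detour through an auxiliary $S''$ by appealing directly to the first item of Proposition~\ref{effgenprec}, which already tells you that $\mathbf{F}_\preceq(S')\in\operatorname{U}_\preceq(S)$ whenever $S=S'\cup\{\mathbf{F}_\preceq(S')\}$.
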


Recall that we denote by $\mathcal{S}_{g,d}$ the set of generalized numerical semigroups in $\mathbb{N}^{d}$ of genus $g$, and $N_{g,d}=|\mathcal{S}_{g,d}|$. From the above discussion, the computation of $N_{g,d}$ may be done by computing of the sets $\operatorname{U}_\preceq(S)$, for all $S\in \mathcal{S}_{g-1,d}$. So to obtain $N_{g,d}$ we do not need actually to compute every semigroups in $\mathcal{S}_{g,d}$.

\begin{corollary}
Let $g,d\in \mathbb{N}^{d}$ and $\preceq$ be a relaxed monomial order. Then \[N_{g,d}=\sum_{S\in \mathcal{S}_{g-1,d}}|\operatorname{U}_\preceq(S)|.\]
\end{corollary}

\begin{algorithm} 
\DontPrintSemicolon

\KwData{Two integers $g,d\in \mathbb{N}$ and a relaxed monomial order $\preceq$.}
\KwResult{$N_{g,d}$}

\nl $G=\{\mathbf e_1, \mathbf e_2, \dots, \mathbf e_d\}$, $S_{0,d}=\{(\mathbb{N}^{d},G)\}$, $N_{0,d}=1$, $\mathbf{F}_{\preceq}(\mathbb{N}^{d})=(-1,\ldots,-1)$, $\operatorname{H}(\mathbb{N}^{d})=\emptyset$.\; \label{1}
    \For{$i\in\{0,\ldots,g\}$}{
\nl $S_{i,d}=\{(S^{(j)},A^{(j)})\mid j \in \{1,\ldots,N_{i,d}\}\}$, where $\langle A^{(j)}\rangle=S^{(j)}$, $|A^{(j)}|< \infty$.\;\label{2}
  $N_{i+1,d}=0$.\;
     \For{$j\in \{1,\ldots, N_{i,d}\}$}{
    \nl From $(S^{(j)},A^{(j)})$ find out $\mathcal{A}(S^{(j)})$ and $E^{(j)}=\operatorname{U}_\preceq(S^{(j)})$.\;\label{3}
    $N_{i+1,d}=N_{i+1,d}+|E^{(j)}|$.\;
       }
      \nl \If{$i+1=g$}{\Return $N_{g,d}$.\; STOP}  \label{4}
       $S_{i+1,d}=\emptyset$.\;
       \For{$j\in \{1,\ldots,N_{i,d}\}$}{
            $\{\mathbf{g}_{1},\mathbf{g}_{2},\ldots,\mathbf{g}_{|E^{(j)}|}\}=E^{(j)}$.\;
            \For{$k\in\{1,\ldots,|E^{(j)}|\}$}{
                $S^{(j,k)}=S^{(j)}\setminus \{\mathbf{g}_{k}\}$.\;
                $\mathbf{F}_{\preceq}(S^{(j,k)})=\mathbf{g}_{k}$.\;
                \nl Build $A^{(j,k)}$, with $|A^{(j,k)}|<\infty$, such that $\langle A^{(j,k)} \rangle = S^{(j,k)}$.\; \label{5}
                $\operatorname{H}(S^{(j,k)})=\operatorname{H}(S^{(j)})\cup \{\mathbf{g}_{k}\}$.\;
                  }   
              $S_{i+1,d}=S_{i+1,d}\cup \{(S^{(j,k)},A^{(j,k)})\}$.\;     
            }       
     }

\caption{Algorithm for computing $N_{g,d}$} \label{alg:ngd}
\end{algorithm}

Algorithm~\ref{alg:ngd}  
consists of
pseudocode for the procedure discussed above for computing $N_{g,d}$. We have labeled some lines of the pseudocode in order to explain them further.

\begin{itemize}

\item Line~\ref{1} contains the initial step of the algorithm, that is, the root of $\mathcal{T}_{\preceq}$. It starts from the trivial generalized numerical semigroup $\mathbb{N}^{d}$, which is finitely generated by the standard-basis vectors and whose set of gaps is empty. Moreover, it is the unique generalized numerical semigroup of genus 0, so $N_{0,d}=1$.

\item Line~\ref{2}: $S_{i,d}$ is the set of all generalized numerical semigroups in $\mathbb{N}^{d}$ of genus $i$; we represent each one of these semigroups $S^{(j)}$ by a finite system of generators $A^{(j)}$.

\item Line~\ref{3}: $A^{(j)}$ is not in general the minimal system of generators for $S^{(j)}$; a refinement is needed to find $\mathcal{A}(S^{(j)})$ and $\operatorname{U}_\preceq(S^{(j)})$ to continue the process.

\item Lines~\ref{4} and successive: if $i+1=g$, then $N_{g,d}$ is computed and the algorithm ends. One can compute also the set $\mathcal{S}_{g,d}$, it suffices to run another extra for-loop step. If $i+1<g$, all generalized numerical semigroups of genus $i+1$ are produced from the generalized numerical semigroups of genus $i$, by Proposition~\ref{effgenprec}.

\item  Line~\ref{5}: a finite system of generators $A^{(j,k)}$ for $S^{(j,k)}$ can be obtained from Proposition~\ref{gener}.
\end{itemize}

\begin{remark}
In Line 3 it is required to compute the set of minimal generators of a generalized numerical semigroup $S$, once a finite set of generators $A$ of $S$ is known. This can be performed in different ways. For instance, we can try to decide when an element $\mathbf{v}\in A$ is linear combination of elements in $A\setminus \{\mathbf{v} \}$ with non negative integers coefficients. This passes through determining the existence of nonnegative integer solutions of a linear system of Diophantine equations. Also, we can observe that $\mathbf{v}\in \mathcal{A}(S)$ if and only if $\mathbf{v}-\mathbf{g}\notin S$ for every generator $\mathbf{g}\in A\setminus \{\mathbf{v}\}$. This can be performed recursively, 
or in the case we know the gaps of $S$, this test is straightforward. The caveat of this latter approach is memory usage, since we would need to store the set of gaps for all semigroups computed.
\end{remark}

The previous algorithm produces the rooted tree $\mathcal{T}_{\preceq}$, where the root is $\mathbb{N}^{d}$ and the elements of depth $g$ are all generalized numerical semigroups of genus $g$. Different relaxed monomial orders will produce different rooted trees, as the Frobenius elements and the generators greater than them depend on the chosen monomial order. However, with every relaxed monomial order, the sons of all generalized numerical semigroups of genus $g-1$ are going to be all generalized numerical semigroups of genus $g$, generated without redundancy. Thus these trees all have at level $g$ the set of all generalized numerical semigroups with genus $g$.

\section[Algorithm for minimal generators]{An algorithm to compute minimal generators of a generalized numerical semigroup from its set of gaps}\label{sec:min-gens} 

Let $S\subseteq \mathbb{N}^{d}$ be a generalized numerical semigroup and let $\preceq$ be a relaxed monomial order. We suppose that the set $\operatorname{H}(S)$ of gaps is known and we want to find the set \(\mathcal{A}(S)\) of minimal generators of $S$. If $S=\mathbb{N}^{d}$, then we know that $S$ is generated by $\{\mathbf{e}_{1},\mathbf{e}_{2},\dots,\mathbf{e}_{d}\}$, the standard basis vectors of $\mathbb{R}^{d}$. If $S$ is a proper subset of $\mathbb{N}^{d}$, our procedure is based on the following result, that can be found in a more general setting in \cite[Corollary 9]{garcia2013affine}. 

\begin{corollary} 
Let $S\subset \mathbb{N}^{d}$ be a generalized numerical semigroup of genus $g$ and let $\preceq$ be a relaxed monomial order in $\mathbb{N}^{d}$. Suppose that $\operatorname{H}(S)=\{\mathbf{h}_{1}\preceq \mathbf{h}_{2}\preceq \dots \preceq \mathbf{h}_{g}\}$. Then
\begin{itemize}
    \item $\mathbf{h}_{1}$ is a minimal generator of $\mathbb{N}^{d}$, 
    \item for every $i\in\{2,\ldots,g\}$, $S\setminus \{\mathbf{h}_{1},\mathbf{h}_{2},\ldots, \mathbf{h}_{i-1}\}$ is a generalized numerical semigroup, and 
    \item $\mathbf{h}_{i}\in \operatorname{U}_\preceq(S\setminus \{\mathbf{h}_{1},\mathbf{h}_{2},\ldots, \mathbf{h}_{i-1}\})$, for every $i \in \{2, \ldots, g\}$.
\end{itemize}
\end{corollary}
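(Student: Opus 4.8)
The plan is to prove the three items simultaneously by induction on $i$, reinterpreting the nested sets as $T_i := \mathbb{N}^d \setminus \{\mathbf{h}_1, \ldots, \mathbf{h}_i\}$, so that $T_0 = \mathbb{N}^d$ and $T_g = S$ (the second and third items then concern $T_{i-1}$). Since $\operatorname{H}(S) = \{\mathbf{h}_1, \ldots, \mathbf{h}_g\}$ with $\mathbf{h}_1 \prec \cdots \prec \mathbf{h}_g$, the complement of $T_{i-1}$ in $\mathbb{N}^d$ is the finite set $\{\mathbf{h}_1, \ldots, \mathbf{h}_{i-1}\}$; hence once $T_{i-1}$ is known to be a submonoid it is automatically a generalized numerical semigroup whose gaps are exactly $\mathbf{h}_1, \ldots, \mathbf{h}_{i-1}$, and whose Frobenius element is therefore $\mathbf{F}_\preceq(T_{i-1}) = \mathbf{h}_{i-1}$. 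The engine of the induction will be Proposition~\ref{effgenprec}: removing from a generalized numerical semigroup a minimal generator that lies above its Frobenius element again yields a generalized numerical semigroup.

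The crux is the following claim, which I would isolate as the inductive step: if $\mathbf{h}_i \notin \{\mathbf{h}_1, \ldots, \mathbf{h}_{i-1}\}$ and $T_{i-1}$ is a submonoid, then $\mathbf{h}_i$ is a minimal generator of $T_{i-1}$. To prove it, suppose instead $\mathbf{h}_i = \mathbf{a} + \mathbf{b}$ with $\mathbf{a}, \mathbf{b} \in T_{i-1} \setminus \{\mathbf{0}\}$. Reflexivity of $\preceq$ together with axiom (i) gives $\mathbf{a} \preceq \mathbf{a} + \mathbf{b}$, and since $\mathbf{b} \neq \mathbf{0}$ forces $\mathbf{a} \neq \mathbf{a} + \mathbf{b}$ as vectors, antisymmetry of the total order upgrades this to the strict inequality $\mathbf{a} \prec \mathbf{h}_i$; symmetrically $\mathbf{b} \prec \mathbf{h}_i$. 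Now any element of $\mathbb{N}^d$ strictly below $\mathbf{h}_i$ that is not among $\mathbf{h}_1, \ldots, \mathbf{h}_{i-1}$ must lie in $S$, because the only gaps of $S$ preceding $\mathbf{h}_i$ are precisely $\mathbf{h}_1, \ldots, \mathbf{h}_{i-1}$. Since $\mathbf{a}, \mathbf{b} \in T_{i-1}$ avoid those points, we get $\mathbf{a}, \mathbf{b} \in S$, whence $\mathbf{h}_i = \mathbf{a} + \mathbf{b} \in S$ as $S$ is closed under addition, contradicting that $\mathbf{h}_i$ is a gap of $S$. The case $i = 1$, i.e.\ the first item, is the same argument with $T_0 = \mathbb{N}^d$ and no preceding gaps to exclude, showing $\mathbf{h}_1 \in \mathcal{A}(\mathbb{N}^d) = \{\mathbf{e}_1, \ldots, \mathbf{e}_d\}$.

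With the claim in hand I would close the induction as follows. The base case $T_0 = \mathbb{N}^d$ is a generalized numerical semigroup. Assuming $T_{i-1}$ is one, the claim shows $\mathbf{h}_i$ is a minimal generator of $T_{i-1}$, and since $\mathbf{h}_i$ exceeds $\mathbf{F}_\preceq(T_{i-1})$ — which equals $\mathbf{h}_{i-1}$ when $i \geq 2$ and $(-1,\ldots,-1)$ when $i = 1$ by convention — we conclude $\mathbf{h}_i \in \operatorname{U}_\preceq(T_{i-1})$, yielding the third item for $i \in \{2,\ldots,g\}$. Applying Proposition~\ref{effgenprec} to $T_{i-1}$ and the minimal generator $\mathbf{h}_i \in \operatorname{U}_\preceq(T_{i-1})$ then shows that $T_i = T_{i-1} \setminus \{\mathbf{h}_i\}$ is a generalized numerical semigroup with $\mathbf{F}_\preceq(T_i) = \mathbf{h}_i$, which both supplies the second item at the next index and propagates the inductive hypothesis. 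I expect the only delicate point to be the careful use of the order axioms to pass from $\mathbf{a} \preceq \mathbf{a}+\mathbf{b}$ to a \emph{strict} inequality; everything else reduces to the bookkeeping that the gaps of $S$ below $\mathbf{h}_i$ are exactly $\mathbf{h}_1, \ldots, \mathbf{h}_{i-1}$.
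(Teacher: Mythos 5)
Your proof is correct, but it traverses the chain in the opposite direction to the paper's. Writing $T_i=\mathbb{N}^{d}\setminus\{\mathbf{h}_{1},\ldots,\mathbf{h}_{i}\}$ as you do, the paper's proof is a one-liner ascending from $S$: since the gaps of $T_i$ are exactly $\mathbf{h}_{1},\ldots,\mathbf{h}_{i}$ and these are $\preceq$-ordered, one has $\mathbf{h}_{i}=\mathbf{F}_{\preceq}(T_i)$, and the first bullet of Proposition~\ref{effgenprec} (adjoining the Frobenius element yields a generalized numerical semigroup, with the adjoined element landing in $\operatorname{U}_{\preceq}$ of the new semigroup) delivers both remaining items at once by induction upward from $T_g=S$, which is a generalized numerical semigroup by hypothesis. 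You instead descend from $T_0=\mathbb{N}^{d}$ using the second bullet (removal of a minimal generator), and because in that direction you cannot yet invoke the first bullet---it presupposes that $T_i$ is already known to be a semigroup---you are forced to prove by hand that $\mathbf{h}_{i}$ is an atom of $T_{i-1}$ lying above $\mathbf{F}_{\preceq}(T_{i-1})=\mathbf{h}_{i-1}$. Your order-axiom argument for this (from $\mathbf{a}\preceq\mathbf{a}+\mathbf{b}$ by axiom (i), upgraded to $\mathbf{a}\prec\mathbf{h}_{i}$ by antisymmetry since $\mathbf{b}\neq\mathbf{0}$, so that $\mathbf{a},\mathbf{b}$ avoid the gaps $\mathbf{h}_{1},\ldots,\mathbf{h}_{i-1}$, lie in $S$, and force the contradiction $\mathbf{h}_{i}\in S$) is sound; note that it essentially re-derives the ``moreover'' clause of the first bullet of Proposition~\ref{effgenprec} from scratch. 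What your route buys: it is self-contained on the atom question, it matches the direction in which Algorithm~\ref{alg:gfh} actually walks the branch of the tree from $\mathbb{N}^{d}$ down to $S$, and your explicit reinterpretation of the nested sets fixes a genuine typo in the statement (as printed, $S\setminus\{\mathbf{h}_{1},\ldots,\mathbf{h}_{i-1}\}$ literally equals $S$, since the $\mathbf{h}_{j}$ are gaps of $S$; the intended semigroup is $\mathbb{N}^{d}\setminus\{\mathbf{h}_{1},\ldots,\mathbf{h}_{i-1}\}$, which is also what the paper's one-line proof implicitly uses). What the paper's route buys is brevity: ascending from $S$, every needed fact is already packaged in Proposition~\ref{effgenprec}, with no auxiliary lemma required.
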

\begin{proof}
It follows easily from Proposition~\ref{effgenprec}, since $\mathbf{h}_{i}=\mathbf{F}_\preceq(S\setminus \{\mathbf{h}_{1},\mathbf{h}_{2},\ldots, \mathbf{h}_{i}\})$.
\end{proof}

\begin{algorithm}[H]
\caption{Algorithm to compute $\mathcal{A}(S)$ from the set $\operatorname{H}(S)$}\label{alg:gfh}
\DontPrintSemicolon
\KwData{A set $H=\{\mathbf{h}_{1},\mathbf{h}_{2},\ldots,\mathbf{h}_{g}\}\subseteq \mathbb{N}^{d}$, a relaxed monomial order $\preceq$}
\KwResult{If $S=\mathbb{N}^{d}\setminus H$ is a generalized numerical semigroup, $\mathcal{A}(S)$ is computed}
\nl $H_{\preceq}=\{\mathbf{h}_{j_{1}}\preceq \mathbf{h}_{j_{2}}\preceq \ldots \preceq \mathbf{h}_{j_{g}}\}$, $lH=\emptyset$, 
 $G=\{\mathbf{e}_{1},\mathbf{e}_{2},\ldots,\mathbf{e}_{d}\}$.\;
\For{$i\in \{1,\ldots,g\}$}{
     \If{$\mathbf{h}_{j_{i}}\notin G$}{
     $\mathbb{N}^{d}\setminus H$ is not a generalized numerical semigroup.\; STOP}
     $lH=lH \cup \{\mathbf{h}_{j_{i}}\}$.\;
     $G=G\setminus \{\mathbf{h}_{j_{i}}\}$\;
  \nl   $A=\{\mathbf{g}+ \mathbf{h}_{j_{i}}\mid \mathbf{g}\in G\}\cup \{2\mathbf{h}_{j_{i}},3\mathbf{h}_{j_{i}}\}$\;
    \For{$\mathbf{v}\in A$}{
   \nl      \If{$\mathbf{v}$ \emph{is a minimal generator of} $\mathbb{N}^{d}\setminus lH$}{
         $G=G\cup \{\mathbf{v}\}$.}
        }
 }
 \Return $G$\;
 
\end{algorithm}

As above, we have labeled some lines in the pseudo code in order to explain how it works.
\begin{itemize}
\item Line 1: we arrange the elements of $H$ with respect to $\preceq$.
\item Line 2: at this point $G\cup A$ is a finite set of generators of $\mathbb{N}^{d}\setminus lH$ (Proposition~\ref{gener}); a refiniement into a minimal generating set is performed in the next steps.
\item Line 3: the elements in $G$ are already minimal generators of $\mathbb{N}^{d}\setminus lH$, so we only have to decice which element in $A$ are minimal generators.
Observe that $\mathbf{v}$ is a minimal generator of a semigroup $S$ if and only if $\mathbf{v}-\mathbf{g}\notin S$ for every generator $\mathbf{g}$ of \(S\), and in this case we have the advantage of knowing the gap set of the semigroup computed.
\end{itemize}

Observe that while Algorithm~\ref{alg:ngd} covers all the branches of the tree $\mathcal{T}_{\preceq}$ with respect to $\preceq$ (up to depth $g$), Algorithm~\ref{alg:gfh} walks through the unique branch linking $\mathbb{N}^{d}$ with $S=\mathbb{N}^{d}\setminus H$, in the case $S$ is actually a generalized numerical semigroup such that $\operatorname{H}(S)=H$.
Moreover, Algorithm~\ref{alg:gfh} can verify if a given finite set $H\subseteq \mathbb{N}^{d}$ has the property that $\mathbb{N}^{d}\setminus H$ is a generalized numerical semigroup.\\
Another way to test if a given set is the set of gaps of a submonoid in $\mathbb{N}^{d}$ is provided by the next result. For $\mathbf{x}\in \mathbb{N}^{d}$ we define
\[
\operatorname{B}(\mathbf{x})=\{\mathbf{n}\in \mathbb{N}^{d} \mid  \mathbf{n}\le \mathbf{x}\},
\] 
where $\le$ denotes the usual partial order in $\mathbb{N}^{d}$.

\begin{proposition}
Let $H\subset \mathbb{N}^{d}\setminus \{\mathbf{0}\}$. Then $\mathbb{N}^{d}\setminus H$ is a monoid if and only if for every $\mathbf{h}\in H$, $\mathbf{h}-\mathbf{x}\in H$ for all $\mathbf{x}\in \operatorname{B}(\mathbf{h})\setminus H$.
\end{proposition}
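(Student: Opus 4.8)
The plan is to write $S=\mathbb{N}^{d}\setminus H$ and to reduce everything to the single slogan that \emph{a gap can never be written as a sum of two non-gaps}. First I would record the easy structural facts. Since $H\subseteq \mathbb{N}^{d}\setminus\{\mathbf{0}\}$, we have $\mathbf{0}\notin H$, so $\mathbf{0}\in S$; thus $S$ is a monoid precisely when it is closed under addition, and this is the only property that needs to be checked. Next I would reinterpret the hypothesis: for a fixed $\mathbf{h}\in H$, the set $\operatorname{B}(\mathbf{h})\setminus H$ is exactly the set of $\mathbf{x}\in\mathbb{N}^{d}$ with $\mathbf{x}\le \mathbf{h}$ and $\mathbf{x}\in S$, and for each such $\mathbf{x}$ the difference $\mathbf{h}-\mathbf{x}$ lies in $\mathbb{N}^{d}$ because $\mathbf{x}\le\mathbf{h}$. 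So the stated condition reads: whenever a gap $\mathbf{h}$ decomposes as $\mathbf{h}=\mathbf{x}+(\mathbf{h}-\mathbf{x})$ with $\mathbf{x}\in S$, the other summand $\mathbf{h}-\mathbf{x}$ must again be a gap.

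For the forward implication I would assume $S$ is a monoid and fix $\mathbf{h}\in H$ and $\mathbf{x}\in\operatorname{B}(\mathbf{h})\setminus H$. Then $\mathbf{x}\in S$ and $\mathbf{h}-\mathbf{x}\in\mathbb{N}^{d}$. If it were the case that $\mathbf{h}-\mathbf{x}\notin H$, i.e.\ $\mathbf{h}-\mathbf{x}\in S$, then closure of $S$ under addition would give $\mathbf{x}+(\mathbf{h}-\mathbf{x})=\mathbf{h}\in S$, contradicting $\mathbf{h}\in H$. Hence $\mathbf{h}-\mathbf{x}\in H$, which is exactly the required condition.

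For the converse I would assume the condition and verify closure. Take $\mathbf{a},\mathbf{b}\in S$ and suppose, for contradiction, that $\mathbf{h}:=\mathbf{a}+\mathbf{b}\notin S$, i.e.\ $\mathbf{h}\in H$. Since $\mathbf{a}\le\mathbf{a}+\mathbf{b}=\mathbf{h}$ and $\mathbf{a}\in S$ (so $\mathbf{a}\notin H$), we have $\mathbf{a}\in\operatorname{B}(\mathbf{h})\setminus H$; applying the hypothesis to $\mathbf{h}$ and $\mathbf{x}=\mathbf{a}$ yields $\mathbf{h}-\mathbf{a}=\mathbf{b}\in H$, contradicting $\mathbf{b}\in S$. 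Therefore $\mathbf{a}+\mathbf{b}\in S$, and together with $\mathbf{0}\in S$ this shows $S$ is a monoid. I do not expect a genuine obstacle here: the only points that require a moment of care are that $\mathbf{h}-\mathbf{x}\in\mathbb{N}^{d}$ is guaranteed by $\mathbf{x}\le\mathbf{h}$ (so the partial order, not merely membership in $H$, is what makes the differences meaningful) and that $\mathbf{0}\in S$ supplies the identity; both directions are then two readings of the same equivalence.
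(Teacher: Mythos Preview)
Your proof is correct and follows essentially the same contrapositive argument as the paper's own proof, with the same two directions handled in the same way. The only difference is that you spell out a few structural observations (that $\mathbf{0}\in S$ and that $\mathbf{h}-\mathbf{x}\in\mathbb{N}^{d}$) which the paper leaves implicit.
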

\begin{proof}
\emph{Necessity}. Suppose there exists $\mathbf{h}\in H$ such that $\mathbf{y}=\mathbf{h}-\mathbf{x}\notin H$ for some $\mathbf{x}\in \operatorname{B}(\mathbf{h})\setminus H$. Then $\mathbf{h}=\mathbf{x}+\mathbf{y}$ with $\mathbf{x},\mathbf{y}\in \mathbb{N}^{d}\setminus H$; a contradiction.

\emph{Sufficiency}. Suppose that $\mathbb{N}^{d}\setminus H$ is not a monoid. Then there exists  $\mathbf{h}\in H$ such that $\mathbf{h}=\mathbf{x}+\mathbf{y}$ with $\mathbf{x},\mathbf{y}\in \mathbb{N}^{d}\setminus H$. In particular, $\mathbf{x}\in \operatorname{B}(\mathbf{h})\setminus H$ and $\mathbf{h}-\mathbf{x}\notin H$, contradicting the hypothesis.
\end{proof}

Observe that $\operatorname{B}(\mathbf{h})$ is a finite set for each $\mathbf{h}\in \mathbb{N}^{d}$. So if $H$ is a finite set, the condition in the previous proposition is easy to test. In particular, it is also possible to obtain a procedure to test if a given finite set $H$ is the set of gaps of a generalized numerical semigroup, without using Algorithm~\ref{alg:gfh}.

\section[Algorithm for gaps]{An algorithm to compute the set of gaps of a generalized numerical semigroup from a generating set}\label{sec:gaps}

Let $S\subseteq \mathbb{N}^{d}$ be a generalized numerical semigroup and suppose that a finite set of (not necessarily minimal) generators is known. A possible way to compute $\operatorname{H}(S)$ is via the following characterization that appears in  \cite{Analele}.

\begin{theorem}\cite[Theorem 2.8]{Analele} Let $d\geq 2$ and let $S=\langle A\rangle $ be the monoid generated by a set $A\subseteq \mathbb{N}^{d}$. Then $S$ is a generalized numerical semigroup if and only if the set $A$ fulfills the following conditions.

\begin{enumerate}

\item For all $j\in\{1,2,\ldots,d\}$, there exist $a_{1}^{(j)},a_{2}^{(j)},\ldots,a_{r_{j}}^{(j)}$, $r_{j}\in \mathbb{N}\setminus \{0\}$, such that $\gcd(a_{1}^{(j)},a_{2}^{(j)},\ldots,a_{r_{j}}^{(j)})=1$, and  $a_{1}^{(j)}\mathbf{e}_{j},a_{2}^{(j)}\mathbf{e}_{j},\ldots,a_{r_{j}}^{(j)}\mathbf{e}_{j}\in A$. In particular, if $S_i$ is the intersection of $S$ with the $i$th coordinate, then $S_i$ is a numerical semigroup. 

\item For every $i,k\in\{1,\ldots,d\}$, with $i<k$, there exist $\mathbf{x}_{ik},\mathbf{x}_{ki}\in A$ such that $\mathbf{x}_{ik}=\mathbf{e}_{i}+n_{i}^{(k)}\mathbf{e}_{k}$ and $\mathbf{x}_{ki}=\mathbf{e}_{k}+n_{k}^{(i)}\mathbf{e}_{i}$ for some $n_{i}^{(k)},n_{k}^{(i)}\in \mathbb{N}$.
\end{enumerate}

Furthermore, 
if we define $\mathbf{v}=(v^{(1)},v^{(2)},\ldots,v^{(d)})\in \mathbb{N}^{d}$  as 
\[ v^{(j)}=\sum_{i\neq j}^{d}\operatorname{F}^{(i)}n_{i}^{(j)}+\operatorname{F}^{(j)},\]
where $\operatorname{F}^{(i)}=\max\{\operatorname{F}(S_i),0\}$  for all $i$, then $\operatorname{H}(S)\subseteq \operatorname{B}(\mathbf{v})$.

\label{nd}\end{theorem}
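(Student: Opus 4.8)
The plan is to prove both implications of the characterization together with the inclusion, observing that the displayed bound $\operatorname{H}(S)\subseteq\operatorname{B}(\mathbf{v})$ does double duty: once it is established, $\operatorname{H}(S)$ sits inside the finite box $\operatorname{B}(\mathbf{v})$, so $S$ is automatically a generalized numerical semigroup, which is precisely the sufficiency of conditions (1) and (2). Hence there are really two tasks: first, assuming $S$ is a generalized numerical semigroup, derive (1) and (2); second, assuming (1) and (2), prove the bound.

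For necessity I would exploit finiteness of $\operatorname{H}(S)$ together with the fact, recalled just before the statement, that every generating set contains all atoms. For (1), an element $n\mathbf{e}_j$ lies in $S$ only as a non-negative combination of those generators supported on the $j$-th axis (all coordinates being non-negative, any generator used must vanish off axis $j$); thus $S_j=\{n\in\mathbb{N}: n\mathbf{e}_j\in S\}$ is generated by the axis elements of $A$, and since $\mathbb{N}^d\setminus S$ is finite, $S_j$ is cofinite in $\mathbb{N}$, i.e. a numerical semigroup, so finitely many of those generators already have $\gcd$ one. For (2), fix $i\neq k$; cofiniteness forces $\mathbf{e}_i+n\mathbf{e}_k\in S$ for some $n$, and I claim the minimal such $n$ produces an atom. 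Indeed, if $\mathbf{e}_i+n\mathbf{e}_k=\mathbf{u}+\mathbf{w}$ with $\mathbf{u},\mathbf{w}\in S\setminus\{\mathbf{0}\}$, comparing coordinates forces $\mathbf{u}=\mathbf{e}_i+a\mathbf{e}_k$ and $\mathbf{w}=b\mathbf{e}_k$ with $a+b=n$ and $b\geq 1$, so $\mathbf{e}_i+a\mathbf{e}_k\in S$ with $a<n$, contradicting minimality. This atom must lie in $A$, giving $\mathbf{x}_{ik}$; the element $\mathbf{x}_{ki}$ is obtained symmetrically.

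For the bound, assume (1) and (2) and fix $\mathbf{w}$ with $w^{(j)}>v^{(j)}$ for some $j$; the goal is to exhibit $\mathbf{w}$ as an element of $S$. The idea is to route every off-axis coordinate onto the $j$-th axis using the connectors $\mathbf{x}_{ij}=\mathbf{e}_i+n_i^{(j)}\mathbf{e}_j$, but only for the coordinates that are not already inside their own axis semigroup. Concretely, let $L=\{i\neq j: w^{(i)}>\operatorname{F}(S_i)\}$ and $R=\{i\neq j: w^{(i)}\leq\operatorname{F}(S_i)\}$. For $i\in L$ I keep the summand $w^{(i)}\mathbf{e}_i$, which lies in $S$ since $w^{(i)}\in S_i$; for $i\in R$ I use $w^{(i)}$ copies of $\mathbf{x}_{ij}$, which reproduces the $i$-th coordinate exactly and adds $w^{(i)}n_i^{(j)}$ to the $j$-th. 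Subtracting all these contributions from $\mathbf{w}$ leaves a single axis element $m\mathbf{e}_j$ with $m=w^{(j)}-\sum_{i\in R}w^{(i)}n_i^{(j)}$, so it remains only to check $m\in S_j$. Using $w^{(j)}>v^{(j)}$, together with $w^{(i)}\leq\operatorname{F}^{(i)}$ for $i\in R$ and $\operatorname{F}^{(i)}\geq 0$ for $i\in L$, a short estimate shows $m>\operatorname{F}^{(j)}+\sum_{i\in L}\operatorname{F}^{(i)}n_i^{(j)}+\sum_{i\in R}(\operatorname{F}^{(i)}-w^{(i)})n_i^{(j)}\geq\operatorname{F}^{(j)}\geq\operatorname{F}(S_j)$, whence $m\in S_j$ and therefore $\mathbf{w}\in S$.

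The necessity arguments are the clean part, being immediate from cofiniteness and the atom/generator observation. I expect the main obstacle to be the bound: the decomposition has to be chosen so that precisely the ``small'' coordinates (those at most the relevant Frobenius number, and hence possibly outside $S_i$) are pushed through the connectors, while the ``large'' ones stay on their own axes. The shape of $v^{(j)}$, built from $\operatorname{F}^{(i)}=\max\{\operatorname{F}(S_i),0\}$ rather than from $w^{(i)}$, is exactly what makes the final estimate $m>\operatorname{F}(S_j)$ close. I would also dispatch at the outset the degenerate case $S_i=\mathbb{N}$, where $\operatorname{F}(S_i)=-1$, $\operatorname{F}^{(i)}=0$ and every such coordinate falls into $L$, so that the inequalities involving $\operatorname{F}^{(i)}$ are applied consistently throughout.
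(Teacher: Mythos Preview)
The paper does not supply its own proof of this theorem; it is quoted verbatim from \cite[Theorem~2.8]{Analele} and then used as a black box in Algorithm~\ref{alg:gaps-from-generators}. So there is nothing in the paper to compare your argument against.

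That said, your argument is sound and is essentially the natural one. A few remarks to make it watertight. For necessity of (1), the link you are using is that an element $a\mathbf{e}_j$ is an atom of $S$ if and only if $a$ is an atom of $S_j$ (the decomposition argument you give for (2) works here too: any expression $a\mathbf{e}_j=\mathbf{u}+\mathbf{w}$ with $\mathbf{u},\mathbf{w}\in S\setminus\{\mathbf 0\}$ forces both summands onto the $j$-th axis). Since $\mathcal A(S)\subseteq A$, the minimal generators of $S_j$ appear as axis elements of $A$ and have $\gcd$ one. For necessity of (2), your minimality argument is correct and also covers $n=0$ (then $\mathbf e_i$ itself is the atom). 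For the bound, the partition $L/R$ and the estimate
\[
m \;=\; w^{(j)}-\sum_{i\in R}w^{(i)}n_i^{(j)} \;>\; \operatorname{F}^{(j)}+\sum_{i\in L}\operatorname{F}^{(i)}n_i^{(j)}+\sum_{i\in R}\bigl(\operatorname{F}^{(i)}-w^{(i)}\bigr)n_i^{(j)} \;\ge\; \operatorname{F}^{(j)}\;\ge\;\operatorname{F}(S_j)
\]
are exactly right; the use of $\operatorname{F}^{(i)}=\max\{\operatorname{F}(S_i),0\}$ is what makes both the $L$-terms and the $R$-terms nonnegative. Your treatment of the degenerate case $S_i=\mathbb N$ (which forces $i\in L$) is correct and worth stating explicitly.
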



\begin{algorithm}[H]
\caption{Algorithm to compute $\operatorname{H}(S)$ from a finite set $A$ such that $\langle A\rangle=S$ \label{alg:gaps-from-generators}}
\DontPrintSemicolon
\KwData{A finite set $A\subseteq \mathbb{N}^{d}$}
\KwResult{If $S:=\langle A\rangle$ is a generalized numerical semigroup, $\operatorname{H}(S)$ is computed}
\nl \For{$i\in \{1,\ldots,d\}$}{
           Gather in a set $A_{j}$ the elements $a\in \mathbb{N}$ such $a\mathbf{e}_{j}\in A$ .\;
            \eIf{$S_{j}=\langle A_{j}\rangle$ is a numerical semigroup}             {Compute $\operatorname{F}^{(i)}=\max\{\operatorname{F}(S_j),0\}$\;} 
{$\langle A \rangle$ is not a generalized numerical semigroup\; STOP}
            
            }
\nl \For{$i\in\{1,\ldots,d\}$}{
             \For{$k\in \{1,\ldots,d\}$, $k\neq i$}{
                       Find $n_{i}^{(k)}\in \mathbb{N}$ such that $\mathbf{e}_{i}+n_{i}^{(k)}\mathbf{e}_{k}\in A$\;
             \If{$n_{i}^{(k)}$ does not exist}{$\langle A\rangle$ is not a generalized numerical semigroup\; STOP}
                                        }
                        }
\nl \For{$i\in \{1,\ldots,d\}$}{
           $v^{(j)}=\sum_{i\neq j}^{d}F^{(i)}n_{i}^{(j)}+F^{(j)}$
           }

$\mathbf{v}=(v^{(1)},v^{(2)},\ldots,v^{(d)})$\;
$\operatorname{B}(\mathbf{v})=\{\mathbf{n}\in \mathbb{N}^{d}\mid \mathbf{n}\leq \mathbf{v}\}$\; 
$\operatorname{H}(S)=\emptyset$\;
\nl \For{$\mathbf{x}\in \operatorname{B}(\mathbf{v})$}{
                 \If{$\mathbf{x}\notin \langle A\rangle$}{
                 $\operatorname{H}(S)=\operatorname{H}(S)\cup \{\mathbf{x}\}$}
                                      }
                                  
           \Return $\operatorname{H}(S)$
\end{algorithm}

A brief description of main parts of the algorithm follows.
\begin{itemize}
\item Line 1: we check if the first condition of Theorem~\ref{nd} holds; if not, $A$ does not generate a generalized numerical semigroup.
\item Line 2: we check if the second condition of Theorem~\ref{nd} is satisfied, and if this is not the case, $A$ does not generate a generalized numerical semigroup.
\item Line 3: we compute all coordinates of the vector $\mathbf{v}$ referred in Theorem~\ref{nd} such that $\operatorname{H}(S)\subseteq \operatorname{B}(\mathbf{v})$.
\item Line 4: observe that $\operatorname{B}(\mathbf{v})$ is a finite set, so we have to look for the gaps of the semigroup among a finite number of elements. 
\end{itemize}

\section{An alternative procedure to compute all generalized numerical semigroups of given genus} \label{sec:alt-alg}

The algorithm described in Section~\ref{sec:first-alg} allows to obtain all generalized numerical semigroups in $\mathbb{N}^{d}$ up to genus $g$, and in particular, to compute $N_{g,d}$, the cardinality of $\mathcal{S}_{g,d}$. Observe that with that procedure, in order to compute $N_{g,d}$ one has to compute all $N_{g',d}$ for all $g'<g$, that is, the whole semigroup tree $\mathcal{T}_\preceq$ up to level~$g$. 

In this section we define a rooted tree that contains all generalized numerical semigroups in $\mathbb{N}^{d}$ of fixed genus $g$, so that we are able to compute the set $\mathcal{S}_{g,d}$ without considering all generalized numerical semigroups of genus less than $g$. Such rooted tree is inspired by the \emph{ordinarization tranform} for numerical semigroups, introduced in \cite{bras2012ordinarization}.


\begin{proposition}
Let $\preceq$ be a relaxed monomial order in $\mathbb{N}^{d}$ and $\mathbf{s}\in \mathbb{N}^{d}$. Suppose that the set $\{\mathbf{t}\in \mathbb{N}^{d}\mid \mathbf{t}\preceq \mathbf{s}\}$ is finite. Then the set $S=\{\mathbf{x}\in \mathbb{N}^{d}\mid \mathbf{s}\prec \mathbf{x}\}\cup \{\mathbf{0}\}$ is a generalized numerical semigroup.
\end{proposition}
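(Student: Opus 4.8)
The plan is to verify directly the two defining properties of a generalized numerical semigroup: that $S$ is a submonoid of $\mathbb{N}^d$, and that its complement $\operatorname{H}(S)=\mathbb{N}^d\setminus S$ is finite.

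First I would dispose of the finiteness of the complement, which is essentially built into the hypothesis. Since $\preceq$ is a total order, every $\mathbf{x}\in\mathbb{N}^d$ satisfies exactly one of $\mathbf{x}\preceq\mathbf{s}$ or $\mathbf{s}\prec\mathbf{x}$, so $\mathbb{N}^d$ is the disjoint union of $\{\mathbf{x}\mid\mathbf{x}\preceq\mathbf{s}\}$ and $\{\mathbf{x}\mid\mathbf{s}\prec\mathbf{x}\}$. By axiom (ii) of a relaxed monomial order we have $\mathbf{0}\preceq\mathbf{s}$, so $\mathbf{0}$ lies in the first piece; consequently $\mathbb{N}^d\setminus S=\{\mathbf{x}\mid\mathbf{x}\preceq\mathbf{s}\}\setminus\{\mathbf{0}\}$, which is finite precisely because $\{\mathbf{t}\in\mathbb{N}^d\mid\mathbf{t}\preceq\mathbf{s}\}$ is assumed finite.

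Next I would check that $S$ is a submonoid. That $\mathbf{0}\in S$ holds by construction, so only closure under addition remains. Given $\mathbf{x},\mathbf{y}\in S$, the cases where one of them equals $\mathbf{0}$ are immediate, since then the sum equals the other element, which lies in $S$. Otherwise $\mathbf{s}\prec\mathbf{x}$ and $\mathbf{s}\prec\mathbf{y}$, and I must show $\mathbf{s}\prec\mathbf{x}+\mathbf{y}$. The key observation is that applying axiom (i) to the reflexive relation $\mathbf{x}\preceq\mathbf{x}$ (with $\mathbf{u}=\mathbf{y}$) yields $\mathbf{x}\preceq\mathbf{x}+\mathbf{y}$. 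Chaining this with $\mathbf{s}\prec\mathbf{x}$ gives $\mathbf{s}\prec\mathbf{x}+\mathbf{y}$, hence $\mathbf{x}+\mathbf{y}\in S$.

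The only subtle point—and the place I would be most careful—is the strictness in the final inequality, since axiom (i) by itself produces only the non-strict bound $\mathbf{x}\preceq\mathbf{x}+\mathbf{y}$. This is handled precisely by the chain $\mathbf{s}\prec\mathbf{x}\preceq\mathbf{x}+\mathbf{y}$: if $\mathbf{x}+\mathbf{y}=\mathbf{s}$ held, then $\mathbf{x}\preceq\mathbf{x}+\mathbf{y}=\mathbf{s}$ would give $\mathbf{x}\preceq\mathbf{s}$, contradicting $\mathbf{s}\prec\mathbf{x}$. Everything else is routine, so I do not expect any genuine difficulty beyond keeping track of strict versus non-strict order relations; in particular, note that the finiteness hypothesis on $\{\mathbf{t}\preceq\mathbf{s}\}$ is used only for the complement and plays no role in the submonoid verification.
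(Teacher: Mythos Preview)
Your proof is correct and follows essentially the same approach as the paper: both use axiom~(i) of a relaxed monomial order to pass from $\mathbf{s}\prec\mathbf{x}$ to $\mathbf{s}\prec\mathbf{x}+\mathbf{y}$, together with the finiteness hypothesis for the complement. Your version is simply more explicit about the case $\mathbf{x}=\mathbf{0}$ and about why strictness is preserved, points the paper's brief proof glosses over.
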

\begin{proof}
By hypothesis, $\mathbb{N}^{d}\setminus S$ is finite. Moreover if $\mathbf{x},\mathbf{y}\in S$ then $\mathbf{s}\preceq\mathbf{x}$ and $\mathbf{s}\preceq\mathbf{y}$, so  $\mathbf{s}\preceq\mathbf{x}+\mathbf{y}$ since $\preceq$ is a relaxed monomial order.
\end{proof}
\begin{definition}
Let $\preceq$ be a relaxed monomial order. A monoid $S\subseteq \mathbb{N}^{d}$ satisfying the hypothesis of the above proposition is an \emph{ordinary} generalized numerical semigroup with respect to $\preceq$.

Let $\{\mathbf{0}=\mathbf{s}_{0}\preceq \mathbf{s}_{1}\preceq \cdots \preceq \mathbf{s}_{g}\}$ be the list of the first $g+1$ elements in $\mathbb{N}^{d}$, ordered by $\preceq$. We denote the ordinary generalized numerical semigroup in $\mathbb{N}^{d}$ of genus $g$ with respect to $\preceq$ by \[R_{g,d}(\preceq)=\{\mathbf{x}\in \mathbb{N}^{d}\mid \mathbf{s}_{g}\prec \mathbf{x}\}\cup \{\mathbf{0}\}.\]
\end{definition}

Observe that the previous definition of ordinary generalized numerical semigroup depends strongly on the relaxed monomial order defined: different relaxed monomial orders define different ordinary generalized numerical semigroup of given genus. While for numerical semigroups, once $g$ is fixed, there exists a unique ordinary numerical semigroup of genus $g$, for generalized numerical semigroups (following the above definition) the uniqueness of the ordinary generalized numerical semigroup of genus $g$ can only be guaranteed by fixing also a relaxed monomial order $\preceq$.\\

If $S\subseteq \mathbb{N}^{d}$ is a generalized numerical semigroup, we can define the set of \emph{special gaps} of $S$ as \[\operatorname{SG}(S)=\{\mathbf{h}\in \operatorname{H}(S)\mid 2\mathbf{h}\in S, \mathbf{h}+\mathbf{s}\in S\ \mbox{for all}\ \mathbf{s}\in S\setminus\{0\}\}.\]   
Observe that $\operatorname{H}(S)$ and $\operatorname{SG}(S)$ do not depend on the fixed relaxed monomial order. The following result, which is the analogue to \cite[Proposition 4.33]{rosales2009numerical} for generalized numerical semigroups, characterizes the elements in $\operatorname{SG}(S)$. It highlights the duality between the concepts of minimal generator and special gap. The proof is straightforward and we will omit it.

\begin{proposition}\label{prop:char-sg}
Let $S\subseteq \mathbb{N}^{d}$ be a generalized numerical semigroup and let $\mathbf{h}\in \operatorname{H}(S)$. Then $S\cup \{\mathbf{h}\}$ is a generalized numerical semigroup if and only if $\mathbf{h}\in \operatorname{SG}(S)$.
\end{proposition}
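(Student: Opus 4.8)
The plan is to prove both implications by a direct case analysis of closure under addition, after noting that the finite-complement requirement is automatic. Since $\mathbf{h}\in\operatorname{H}(S)$, removing it from the gap set leaves $\mathbb{N}^{d}\setminus(S\cup\{\mathbf{h}\})=\operatorname{H}(S)\setminus\{\mathbf{h}\}$, which is still finite, and $\mathbf{0}\in S\subseteq S\cup\{\mathbf{h}\}$. Hence $S\cup\{\mathbf{h}\}$ is a generalized numerical semigroup exactly when it is closed under addition, and that is the only property I need to track. I would also record at the outset the elementary fact that $\mathbf{h}\neq\mathbf{0}$ (a gap is never $\mathbf{0}$), so that $(S\cup\{\mathbf{h}\})\setminus\{\mathbf{h}\}=S$; consequently, any element of $S\cup\{\mathbf{h}\}$ distinct from $\mathbf{h}$ already lies in $S$.

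For sufficiency ($\mathbf{h}\in\operatorname{SG}(S)\Rightarrow S\cup\{\mathbf{h}\}$ a generalized numerical semigroup), set $T=S\cup\{\mathbf{h}\}$ and check closure by cases on a sum $\mathbf{a}+\mathbf{b}$ with $\mathbf{a},\mathbf{b}\in T$. If both summands lie in $S$, the sum lies in $S$ because $S$ is a monoid. If exactly one summand equals $\mathbf{h}$, say $\mathbf{a}=\mathbf{h}$, then either $\mathbf{b}=\mathbf{0}$ and $\mathbf{a}+\mathbf{b}=\mathbf{h}\in T$, or $\mathbf{b}\in S\setminus\{\mathbf{0}\}$ and the special-gap condition $\mathbf{h}+\mathbf{s}\in S$ for all $\mathbf{s}\in S\setminus\{\mathbf{0}\}$ gives $\mathbf{a}+\mathbf{b}\in S\subseteq T$. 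If both summands equal $\mathbf{h}$, the condition $2\mathbf{h}\in S$ yields $\mathbf{a}+\mathbf{b}\in S\subseteq T$. These cases are exhaustive, so $T$ is closed under addition.

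For necessity ($S\cup\{\mathbf{h}\}$ a generalized numerical semigroup $\Rightarrow\mathbf{h}\in\operatorname{SG}(S)$), I would use closure of $T=S\cup\{\mathbf{h}\}$ in the reverse direction. Since $\mathbf{h}\in T$, we get $2\mathbf{h}\in T$; as $\mathbf{h}\neq\mathbf{0}$ forces $2\mathbf{h}\neq\mathbf{h}$, the observation above gives $2\mathbf{h}\in S$. Likewise, for any $\mathbf{s}\in S\setminus\{\mathbf{0}\}\subseteq T$ we have $\mathbf{h}+\mathbf{s}\in T$, and $\mathbf{s}\neq\mathbf{0}$ again forces $\mathbf{h}+\mathbf{s}\neq\mathbf{h}$, whence $\mathbf{h}+\mathbf{s}\in S$. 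These are precisely the two defining conditions of $\operatorname{SG}(S)$, so $\mathbf{h}\in\operatorname{SG}(S)$.

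There is essentially no hard step here; the only point requiring a moment's care is the bookkeeping that guarantees each sum we produce is genuinely distinct from $\mathbf{h}$ and therefore lands in $S$ rather than merely in $T$. This rests entirely on $\mathbf{h}\neq\mathbf{0}$, so I would state that observation explicitly and let both implications lean on it. The argument mirrors the classical numerical-semigroup characterization of special gaps in \cite[Proposition 4.33]{rosales2009numerical}, with the partial order on $\mathbb{N}^{d}$ playing no role beyond ensuring finiteness of the complement.
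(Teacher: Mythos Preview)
Your proof is correct and is precisely the natural argument; the paper itself omits the proof entirely, calling it straightforward, so there is nothing to compare against. Your careful bookkeeping that $\mathbf{h}\neq\mathbf{0}$ forces $2\mathbf{h}\neq\mathbf{h}$ and $\mathbf{h}+\mathbf{s}\neq\mathbf{h}$ is exactly the one point that needs attention, and you handle it cleanly.
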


The following result will help us to construct a generalized numerical semigroup from another  by removing the multiplicity and adding the Frobenius element, and thus keeping the genus untouched.

\begin{lemma}
Let $S\subsetneq \mathbb{N}^{d}$ be a generalized numerical semigroup, let $\preceq$ be a relaxed monomial order and suppose that $S$ is not ordinary with respect to~$\preceq$. If $T=(S\cup \{\mathbf{F}_{\preceq}(S)\})\setminus \{\mathbf{m}_{\preceq}(S)\}$, then
\begin{itemize}
    \item $T$ is a generalized numerical semigroup,
    \item $\mathbf{m}_{\preceq}(S)\in \operatorname{SG}(T)$, $\mathbf{m}_{\preceq}(S)\prec \mathbf{m}_{\preceq}(T)$, and 
    \item  
    $\mathbf{F}_{\preceq}(S)\in \operatorname{U}_\preceq(T\cup\{\mathbf{m}_\preceq(S)\})\setminus\{\mathbf{m}_\preceq(S)\}$.
\end{itemize}
\label{treeO1d}
\end{lemma}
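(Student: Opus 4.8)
The plan is to write $S'=S\cup\{\mathbf{F}_\preceq(S)\}$, so that $T=S'\setminus\{\mathbf{m}_\preceq(S)\}$, and then reduce each of the three bullets to results already available. First I would record the structural fact that drives the whole argument. Since $S$ is a generalized numerical semigroup, every gap is $\preceq\mathbf{F}_\preceq(S)$ and every nonzero element of $S$ is $\succeq\mathbf{m}_\preceq(S)$; consequently $S$ is ordinary with respect to $\preceq$ precisely when $\mathbf{F}_\preceq(S)\prec\mathbf{m}_\preceq(S)$ (if this holds, every nonzero element $\prec\mathbf{m}_\preceq(S)$ is forced to be a gap, so the gaps are exactly the first elements in the order, and conversely). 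As $\mathbf{m}_\preceq(S)\in S$ and $\mathbf{F}_\preceq(S)\notin S$ are distinct while $\preceq$ is total, the hypothesis that $S$ is \emph{not} ordinary is therefore equivalent to $\mathbf{m}_\preceq(S)\prec\mathbf{F}_\preceq(S)$. This single inequality will be used repeatedly.

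Next I would identify the multiplicity of $S'$. By Proposition~\ref{effgenprec} the set $S'=S\cup\{\mathbf{F}_\preceq(S)\}$ is a generalized numerical semigroup and $\mathbf{F}_\preceq(S)\in\operatorname{U}_\preceq(S')$; in particular $\mathbf{F}_\preceq(S)$ is a minimal generator of $S'$. Because the only element we have adjoined to $S$ is $\mathbf{F}_\preceq(S)$, which by the inequality above satisfies $\mathbf{m}_\preceq(S)\prec\mathbf{F}_\preceq(S)$, the smallest nonzero element of $S'$ is still $\mathbf{m}_\preceq(S)$; that is, $\mathbf{m}_\preceq(S')=\mathbf{m}_\preceq(S)$, and this element is a minimal generator of $S'$ (the multiplicity always is). The first bullet is then immediate: $T=S'\setminus\{\mathbf{m}_\preceq(S')\}$ is obtained from the generalized numerical semigroup $S'$ by deleting a minimal generator, hence is a generalized numerical semigroup by Proposition~\ref{effgenprec} (equivalently Corollary~\ref{RemoveMult} together with cofiniteness).

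For the third bullet I would simply observe that $T\cup\{\mathbf{m}_\preceq(S)\}=S'$, so the assertion reads $\mathbf{F}_\preceq(S)\in\operatorname{U}_\preceq(S')\setminus\{\mathbf{m}_\preceq(S)\}$; this is exactly the membership $\mathbf{F}_\preceq(S)\in\operatorname{U}_\preceq(S')$ already supplied by Proposition~\ref{effgenprec}, combined with $\mathbf{F}_\preceq(S)\neq\mathbf{m}_\preceq(S)$, which is the strict inequality from the first step. For the first half of the second bullet I would again use $T\cup\{\mathbf{m}_\preceq(S)\}=S'$: since $\mathbf{m}_\preceq(S)\in\operatorname{H}(T)$ (it was deleted from $S'$) and $T\cup\{\mathbf{m}_\preceq(S)\}=S'$ is a generalized numerical semigroup, Proposition~\ref{prop:char-sg} yields $\mathbf{m}_\preceq(S)\in\operatorname{SG}(T)$. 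For the multiplicity inequality, note that $T\setminus\{\mathbf{0}\}$ equals $(S'\setminus\{\mathbf{0}\})\setminus\{\mathbf{m}_\preceq(S)\}$; deleting the $\preceq$-minimum $\mathbf{m}_\preceq(S)$ of the well-ordered set $S'\setminus\{\mathbf{0}\}$ leaves a nonempty set whose minimum is strictly larger, whence $\mathbf{m}_\preceq(S)\prec\mathbf{m}_\preceq(T)$.

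The only genuinely content-bearing step is the translation of ``not ordinary'' into the inequality $\mathbf{m}_\preceq(S)\prec\mathbf{F}_\preceq(S)$; once that is in hand, each bullet is a short deduction from Propositions~\ref{effgenprec} and~\ref{prop:char-sg}. I expect the main point to watch is verifying that adjoining $\mathbf{F}_\preceq(S)$ does not disturb the multiplicity, so that $\mathbf{m}_\preceq(S')=\mathbf{m}_\preceq(S)$; this is exactly where the non-ordinary hypothesis is essential and cannot be dropped.
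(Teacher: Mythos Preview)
Your proof is correct and follows essentially the same route as the paper's: both translate ``not ordinary'' into the strict inequality $\mathbf{m}_\preceq(S)\prec\mathbf{F}_\preceq(S)$, use it to conclude $\mathbf{m}_\preceq(S\cup\{\mathbf{F}_\preceq(S)\})=\mathbf{m}_\preceq(S)$, and then derive each bullet from Propositions~\ref{effgenprec} and~\ref{prop:char-sg} (together with Corollary~\ref{RemoveMult}). The only cosmetic difference is that the paper obtains that $S\cup\{\mathbf{F}_\preceq(S)\}$ is a generalized numerical semigroup via $\mathbf{F}_\preceq(S)\in\operatorname{SG}(S)$ and Proposition~\ref{prop:char-sg}, whereas you invoke Proposition~\ref{effgenprec} directly; both are immediate.
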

\begin{proof}
From the definition of relaxed monomial order, $\mathbf{F}_\preceq(S)\in \operatorname{SG}(S)$. Hence $S\cup \{\mathbf{F}_{\preceq}(S)\}$ is a generalized numerical semigroup in light of Proposition~\ref{prop:char-sg}. Since $S$ is not ordinary, we have that $\mathbf{m}_\preceq(S)\preceq \mathbf{F}_\preceq(S)$. In fact, if $S$ is not ordinary, there exist $\mathbf{s}\in S$ and $\mathbf{h}\in H(S)$ such that  $\mathbf{s}\prec \mathbf{h}$, and in particular $\mathbf{m}_\preceq(S)\preceq \mathbf{s}\prec \mathbf{h}\preceq \mathbf{F}_{\preceq}(S)$. It follows that $\mathbf{m}_\preceq(S)=\mathbf{m}_\preceq(S\cup\{\mathbf{F}_\preceq(S)\})$. Thus,
by Corollary~\ref{RemoveMult}, $T$ is a generalized numerical semigroup. 

Also, as $T\cup \{\mathbf{m}_{\preceq}(S)\}=S \cup \{\mathbf{F}_{\preceq}(S)\}$ is a generalized numerical semigroup, we have $\mathbf{m}_{\preceq}(S)\prec \mathbf{m}_{\preceq}(T)$ and, by Proposition~\ref{prop:char-sg}, $\mathbf{m}_{\preceq}(S)\in \operatorname{SG}(T)$. Moreover, $\mathbf{F}_{\preceq}(S)$ is a minimal generator of $T\cup \{\mathbf{m}_{\preceq}(S)\}$ and $\mathbf{m}_\preceq(S)\neq \mathbf{F}_{\preceq}(S)\succ \mathbf{F}_{\preceq}(T\cup\{\mathbf{m}_\preceq(S)\})$. 
\end{proof}


\begin{definition}
Recall that $\mathcal{S}_{g,d}$ is the set of all generalized numerical semigroups in $\mathbb{N}^{d}$ of genus $g$. We define the \emph{ordinarization transform} with respect to $\preceq$,
$\mathcal{O}_\preceq:\mathcal{S}_{g,d}\rightarrow \mathcal{S}_{g,d}$ as follows:
\[\mathcal{O}_\preceq(S)=
\begin{cases}
(S\cup \{\mathbf{F}_{\preceq}(S)\})\setminus \{\mathbf{m}_{\preceq}(S)\}, & \hbox{if }S\hbox{is not ordinary},\\
S,&\hbox{otherwise}.
\end{cases}\]

Note that this map is well defined by Lemma~\ref{treeO1d}.

\end{definition}



\begin{lemma} Let $T\subseteq \mathbb{N}^{d}$ be a generalized numerical semigroup and  $\preceq$ a relaxed monomial order in $\mathbb{N}^{d}$. If there exist $\mathbf{h}\in \operatorname{SG}(T)$ with $\mathbf{h}\prec \mathbf{m}_{\preceq}(T)$ and $\mathbf{x}\in \operatorname{U}_\preceq(T\cup \{\mathbf{h}\})\setminus\{\mathbf{h}\}$, then $\mathcal{O}_\preceq((T\cup \{\mathbf{h}\})\setminus \{\mathbf{x}\})=T$.
\label{treeO2d}
\end{lemma}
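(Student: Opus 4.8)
The plan is to set $S = T \cup \{\mathbf{h}\}$ and $W = (T\cup\{\mathbf{h}\})\setminus\{\mathbf{x}\} = S\setminus\{\mathbf{x}\}$, and then show directly that the ordinarization transform of $W$ returns $T$. First I would record that $S$ is a generalized numerical semigroup: since $\mathbf{h}\in\operatorname{SG}(T)$, Proposition~\ref{prop:char-sg} yields that $S=T\cup\{\mathbf{h}\}$ is a generalized numerical semigroup. Because $\mathbf{x}\in\operatorname{U}_\preceq(S)$, in particular $\mathbf{x}\in\mathcal{A}(S)$, so the second bullet of Proposition~\ref{effgenprec} gives that $W=S\setminus\{\mathbf{x}\}$ is a generalized numerical semigroup and, as $\mathbf{x}\in\operatorname{U}_\preceq(S)$, that $\mathbf{F}_\preceq(W)=\mathbf{x}$. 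This fixes the Frobenius element needed for the transform.

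Next I would pin down the multiplicity. The hypothesis $\mathbf{h}\prec\mathbf{m}_\preceq(T)$ says $\mathbf{h}$ precedes every nonzero element of $T$, so in $S=T\cup\{\mathbf{h}\}$ the element $\mathbf{h}$ is the $\preceq$-smallest nonzero element, i.e. $\mathbf{m}_\preceq(S)=\mathbf{h}$. Since $\mathbf{h}=\mathbf{m}_\preceq(S)$ and the hypothesis $\mathbf{x}\in\operatorname{U}_\preceq(S)\setminus\{\mathbf{h}\}$ makes $\mathbf{x}$ a nonzero element of $S$ distinct from $\mathbf{h}$, we get $\mathbf{h}\prec\mathbf{x}$; hence deleting $\mathbf{x}$ from $S$ cannot disturb the smallest nonzero element, and $\mathbf{m}_\preceq(W)=\mathbf{h}$ as well.

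It then remains to verify that $W$ is not ordinary with respect to $\preceq$, so that the nontrivial branch of $\mathcal{O}_\preceq$ applies. Since $\operatorname{H}(W)=\operatorname{H}(S)\cup\{\mathbf{x}\}$ and $\mathbf{x}=\mathbf{F}_\preceq(W)\succ\mathbf{h}=\mathbf{m}_\preceq(W)$, the semigroup $W$ has a gap strictly above its multiplicity and is therefore not ordinary. Consequently
\[
\mathcal{O}_\preceq(W)=\bigl(W\cup\{\mathbf{F}_\preceq(W)\}\bigr)\setminus\{\mathbf{m}_\preceq(W)\}=\bigl(W\cup\{\mathbf{x}\}\bigr)\setminus\{\mathbf{h}\}=S\setminus\{\mathbf{h}\}=T,
\]
the last equality using $\mathbf{h}\notin T$. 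This closes the argument.

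The step I expect to require the most care is the bookkeeping around the multiplicity: one must use $\mathbf{h}\prec\mathbf{m}_\preceq(T)$ to see that $\mathbf{h}$ becomes the multiplicity of $S$, and then argue that removing the large generator $\mathbf{x}$ leaves this multiplicity unchanged in $W$. The remaining ingredients—that $S$ and $W$ are generalized numerical semigroups and that $W$ is non-ordinary—follow directly from Propositions~\ref{prop:char-sg} and~\ref{effgenprec} together with the observation that $\mathbf{x}$ is a gap of $W$ lying $\preceq$-above $\mathbf{m}_\preceq(W)$.
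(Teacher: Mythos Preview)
Your proof is correct and follows essentially the same route as the paper: identify $\mathbf{m}_\preceq$ and $\mathbf{F}_\preceq$ of $(T\cup\{\mathbf{h}\})\setminus\{\mathbf{x}\}$ as $\mathbf{h}$ and $\mathbf{x}$ respectively, then unwind the definition of $\mathcal{O}_\preceq$. The only cosmetic differences are that the paper writes $S$ for your $W$ and appeals to Lemma~\ref{treeO1d} to frame the reduction, whereas you invoke Propositions~\ref{prop:char-sg} and~\ref{effgenprec} directly and make the non-ordinariness check explicit.
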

\begin{proof}
If $S=(T\cup \{\mathbf{h}\})\setminus \{\mathbf{x}\}$, then we have $T=(S\cup \{\mathbf{x}\})\setminus \{\mathbf{h}\}$, and so in light of Lemma~\ref{treeO1d}, it suffices to prove that $\mathbf{h}=\mathbf{m}_{\preceq}(S)$ and $\mathbf{x}=\mathbf{F}_{\preceq}(S)$. Since $\mathbf{h}\preceq\mathbf{m}_{\preceq}(T)$, we deduce $\mathbf{h}\preceq \mathbf{t}$ for every $\mathbf{t}\in T$, and consequently $\mathbf{h}=\mathbf{m}_{\preceq}(S)$. Furthermore, since $\mathbf{x}\in \operatorname{U}_\preceq(T\cup \{\mathbf{m}_{\preceq}(S)\})$, we get $\mathbf{x}\succ\mathbf{F}_{\preceq}(T\cup \{\mathbf{h}\})=\mathbf{F}_{\preceq}(S\cup \{\mathbf{x}\})$, whence $\mathbf{x}=\mathbf{F}_{\preceq}(S)$.
\end{proof}

Let $g\in \mathbb{N}$, $\preceq$ be a relaxed monomial order in $\mathbb{N}^{d}$, and $\mathcal{E}$ be the set of pairs of the form $(S,\mathcal{O}_\preceq(S))$.
We denote the directed graph $(\mathcal{S}_{g,d},\mathcal{E})$ by $\mathcal{T}_{g,\preceq}^{d}$.

\begin{theorem}
Let $g\in \mathbb{N}$. The graph $\mathcal{T}_{g,\preceq}^{d}$ is a rooted tree with root $R_{g,d}(\preceq)$. Moreover, if $T\in \mathcal{S}_{g,d}$, then the sons of $T$ are the semigroups of the form $(T \cup \{\mathbf{h}\})\setminus \{\mathbf{x}\}$ with $\mathbf{h}\in \operatorname{SG}(T)$, $\mathbf{h}\prec \mathbf{m}_\preceq(T)$, and $\mathbf{x}\in \operatorname{U}_\preceq(T \cup \{\mathbf{h}\})\setminus\{ \mathbf{h}\}$.
\label{generagd}
\end{theorem}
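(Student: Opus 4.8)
The plan is to exploit that $\mathcal{O}_\preceq$ assigns to each $S\in\mathcal{S}_{g,d}$ a single outgoing edge, so that $\mathcal{T}_{g,\preceq}^{d}$ is a functional graph on the finite set $\mathcal{S}_{g,d}$. For such a graph, being a rooted tree with root $R_{g,d}(\preceq)$ (after discarding the self-loop at the root, that is, declaring the root to be its own parent) is equivalent to the conjunction of two facts: (i) $R_{g,d}(\preceq)$ is the unique fixed point of $\mathcal{O}_\preceq$, and (ii) for every $S$ the iterated sequence $S,\mathcal{O}_\preceq(S),\mathcal{O}_\preceq^{2}(S),\dots$ eventually reaches $R_{g,d}(\preceq)$. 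Indeed, (i) and (ii) together show that the only directed cycle is the self-loop at the root and that every vertex has a unique directed path to the root, which is precisely the in-tree structure claimed.

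For (i), I would first observe that $\mathcal{O}_\preceq(S)=S$ if and only if $S$ is ordinary: when $S$ is not ordinary, Lemma~\ref{treeO1d} gives $\mathbf{m}_\preceq(S)\prec\mathbf{F}_\preceq(S)$, so the transform genuinely replaces the minimal generator $\mathbf{m}_\preceq(S)$ by the gap $\mathbf{F}_\preceq(S)$ and cannot fix $S$. It then remains to note that an ordinary generalized numerical semigroup of genus $g$ has, by definition, gap set equal to the $g$ smallest nonzero elements $\{\mathbf{s}_1,\dots,\mathbf{s}_g\}$, which forces it to be $R_{g,d}(\preceq)$; hence the fixed point is unique.

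For (ii), the key quantity is the multiplicity. For any $S\in\mathcal{S}_{g,d}$ every nonzero element $\prec\mathbf{m}_\preceq(S)$ is a gap, and there are at most $g$ gaps, so $\mathbf{m}_\preceq(S)\preceq\mathbf{s}_{g+1}$, where $\mathbf{s}_{g+1}$ is the element immediately following $\mathbf{s}_g$ in $\preceq$; thus $\mathbf{m}_\preceq$ takes values in the finite set $\{\mathbf{s}_1,\dots,\mathbf{s}_{g+1}\}$, attaining $\mathbf{s}_{g+1}$ exactly at $R_{g,d}(\preceq)$. By Lemma~\ref{treeO1d}, each nontrivial application of $\mathcal{O}_\preceq$ strictly increases $\mathbf{m}_\preceq$ in the well-order $\preceq$; being bounded above by $\mathbf{s}_{g+1}$, the sequence must terminate at the (ordinary, hence unique) fixed point after finitely many steps. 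This both proves (ii) and excludes any cycle other than the self-loop. I expect this monotonicity-plus-boundedness argument to be the main point; once it is in place, connectivity and acyclicity of the functional graph are immediate.

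It remains to describe the sons of $T$, that is, the $S\neq T$ with $\mathcal{O}_\preceq(S)=T$. For the forward inclusion, given such an $S$ (necessarily not ordinary), I would set $\mathbf{h}=\mathbf{m}_\preceq(S)$ and $\mathbf{x}=\mathbf{F}_\preceq(S)$, so that $S=(T\cup\{\mathbf{h}\})\setminus\{\mathbf{x}\}$, and read off from Lemma~\ref{treeO1d} that $\mathbf{h}\in\operatorname{SG}(T)$, $\mathbf{h}\prec\mathbf{m}_\preceq(T)$ and $\mathbf{x}\in\operatorname{U}_\preceq(T\cup\{\mathbf{h}\})\setminus\{\mathbf{h}\}$. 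Conversely, for any $\mathbf{h}\in\operatorname{SG}(T)$ with $\mathbf{h}\prec\mathbf{m}_\preceq(T)$ and $\mathbf{x}\in\operatorname{U}_\preceq(T\cup\{\mathbf{h}\})\setminus\{\mathbf{h}\}$, Proposition~\ref{prop:char-sg} makes $T\cup\{\mathbf{h}\}$ a generalized numerical semigroup of genus $g-1$ and Proposition~\ref{effgenprec} makes $S=(T\cup\{\mathbf{h}\})\setminus\{\mathbf{x}\}$ a generalized numerical semigroup of genus $g$ with $S\neq T$ (since $\mathbf{h}\in S\setminus T$); Lemma~\ref{treeO2d} then yields $\mathcal{O}_\preceq(S)=T$, so $S$ is a son of $T$. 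These two inclusions give the stated description and finish the proof.
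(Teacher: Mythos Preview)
Your proof is correct and follows essentially the same approach as the paper: you use the strict increase of the multiplicity under $\mathcal{O}_\preceq$ (from Lemma~\ref{treeO1d}) together with its boundedness to get termination at the unique ordinary semigroup, and then invoke Lemmas~\ref{treeO1d} and~\ref{treeO2d} for the two inclusions in the description of sons. Your presentation is somewhat more explicit about the functional-graph viewpoint and the uniqueness of the fixed point, but the underlying argument is the same.
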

\begin{proof}
Let $T\in \mathcal{S}_{g,d}$, we define the following sequence: 
\begin{itemize}
\item $T_{0}=T$,
\item $T_{i+1}=\left\lbrace\begin{array}{ll}
         \mathcal{O}_\preceq(T_{i}) & \mbox{if }T_{i}\neq R_{g,d}(\preceq),  \\
         R_{g,d}(\preceq) & \mbox{otherwise,}
        \end{array}
        \right.$
\end{itemize}
in particular $T_{i}=\mathcal{O}_\preceq^{i}(T)$ for all $i$. 
This sequence stabilizes at a certain point, since the multiplicity must be among the first $g+1$ elements of $\mathbb{N}^d$ (ordered by $\preceq$), and at each step the multiplicity increases. Let $k$ be the first integer such that $T_{k}=R_{g,d}(\preceq)$.
So the edges $(T_{0},T_{1}),(T_{1},T_{2}),\ldots,(T_{k-1},T_{k})$ provide the unique path from $T$ to the ordinary numerical semigroup of genus $g$ (with respect to $\preceq$).

By Lemma~\ref{treeO2d}, every pair $(T \cup \{\mathbf{h}\})\setminus \{\mathbf{x}\},T)$ is an edge of $\mathcal{T}_{g,\preceq}^{d}$ for every choice of $\mathbf{h}$ and $\mathbf{x}$ as in the statement, that is $(T \cup \{\mathbf{h}\})\setminus \{\mathbf{x}\},T)$ is a son of $T$. Conversely let $(S,T)$ be an edge of $\mathcal{T}_{g,\preceq}^{d}$. From the definition we have $T= \mathcal{O}_\preceq(S)$ so $T=(S\cup \{\mathbf{F}_{\preceq}(S)\})\setminus \{\mathbf{m}_{\preceq}(S)\}$. In particular $S=(T\cup \{\mathbf{m}_\preceq(S)\}\setminus \{\mathbf{F}_\preceq(S)\}$ and from Lemma~\ref{treeO1d} we have $\mathbf{m}_{\preceq}(S)\in \operatorname{SG}(T)$ with $\mathbf{m}_{\preceq}(S)\prec \mathbf{m}_{\preceq}(T)$, and $\mathbf{F}_{\preceq}(S)\in \operatorname{U}_\preceq(T\cup\{\mathbf{m}_\preceq(S)\})\setminus\{\mathbf{m}_\preceq(S)\}$. This fact ensures that all descendants in $\mathcal{T}_{g,\preceq}^{d}$ of a generalized numerical semigroup are of the described form.
\end{proof}

\begin{corollary}
Let $T\in \mathcal{S}_{g,d}$. Then $T$ is a leaf in $\mathcal{T}_{g,\preceq}^{d}$ if and only if for all $\mathbf{h}\in \operatorname{SG}(T)$ either $\mathbf{h}\succ \mathbf{m}_{\preceq}(T)$ or $\operatorname{U}_\preceq(T \cup \{\mathbf{h}\})\subseteq\{\mathbf{h}\}$.
\end{corollary}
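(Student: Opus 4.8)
The plan is to read this off directly from Theorem~\ref{generagd}, since that result already gives an exact description of the sons of any $T\in\mathcal{S}_{g,d}$ in the tree $\mathcal{T}_{g,\preceq}^{d}$. In a rooted tree a vertex is a leaf precisely when it has no sons, so the entire task reduces to negating the existential statement ``there exist $\mathbf{h}$ and $\mathbf{x}$ producing a son'' and checking that the negation matches the condition in the corollary. There is essentially no new geometry or arithmetic to do; the work is purely in unwinding quantifiers.

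First I would recall from Theorem~\ref{generagd} that the sons of $T$ are exactly the semigroups $(T\cup\{\mathbf{h}\})\setminus\{\mathbf{x}\}$ where $\mathbf{h}\in\operatorname{SG}(T)$, $\mathbf{h}\prec\mathbf{m}_{\preceq}(T)$, and $\mathbf{x}\in\operatorname{U}_\preceq(T\cup\{\mathbf{h}\})\setminus\{\mathbf{h}\}$. Hence $T$ is a leaf if and only if no such pair $(\mathbf{h},\mathbf{x})$ exists. Negating the nested existential quantifiers, this means: for every $\mathbf{h}\in\operatorname{SG}(T)$, it is \emph{not} the case that both $\mathbf{h}\prec\mathbf{m}_{\preceq}(T)$ and $\operatorname{U}_\preceq(T\cup\{\mathbf{h}\})\setminus\{\mathbf{h}\}\neq\emptyset$; equivalently, for every $\mathbf{h}\in\operatorname{SG}(T)$ we have $\mathbf{h}\not\prec\mathbf{m}_{\preceq}(T)$ or $\operatorname{U}_\preceq(T\cup\{\mathbf{h}\})\setminus\{\mathbf{h}\}=\emptyset$.

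It then remains to rewrite the two disjuncts into the form stated. For the first, since $\preceq$ is a total order, $\mathbf{h}\not\prec\mathbf{m}_{\preceq}(T)$ is the same as $\mathbf{m}_{\preceq}(T)\preceq\mathbf{h}$; and because $\mathbf{h}\in\operatorname{H}(T)$ while $\mathbf{m}_{\preceq}(T)\in T$, the two are distinct, so this is exactly $\mathbf{h}\succ\mathbf{m}_{\preceq}(T)$, justifying the strict inequality in the statement. For the second, $\operatorname{U}_\preceq(T\cup\{\mathbf{h}\})\setminus\{\mathbf{h}\}=\emptyset$ is literally $\operatorname{U}_\preceq(T\cup\{\mathbf{h}\})\subseteq\{\mathbf{h}\}$. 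Combining the two translated disjuncts yields the claimed characterization. The only point requiring any care is this translation of $\mathbf{h}\not\prec\mathbf{m}_{\preceq}(T)$ into $\mathbf{h}\succ\mathbf{m}_{\preceq}(T)$, which is where one must invoke totality of $\preceq$ together with $\mathbf{h}$ being a gap; everything else is a mechanical negation, and no genuine obstacle arises because Theorem~\ref{generagd} has already done the substantive work.
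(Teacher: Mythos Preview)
Your proof is correct and follows exactly the approach the paper intends: the corollary is stated immediately after Theorem~\ref{generagd} with no proof given, precisely because it is obtained by negating the description of sons in that theorem. Your care in justifying the strict inequality $\mathbf{h}\succ\mathbf{m}_{\preceq}(T)$ via totality of $\preceq$ and the fact that a gap cannot equal the multiplicity is a nice touch that the paper leaves implicit.
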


Theorem~\ref{generagd} allow us to write another algorithm to produce all generalized numerical semigroups in $\mathbb{N}^{d}$ of genus $g$; the previous corollary gives us the stop condition.

\begin{algorithm}[H]  \label{alg1}
\caption{Algorithm for computing the set $\mathcal{S}_{g,d}$ \label{alg:S-g-d}}
\DontPrintSemicolon

\KwData{Two integers $g,d\in \mathbb{N}$ and a relaxed monomial order $\preceq$ in \(\mathbb{N}^d\).}
\KwResult{$\mathcal{S}_{g,d}$}

 Compute $R_{g,d}(\preceq)$.\; 
 $\mathcal{S}_{g,d}=\{R_{g,d}(\preceq)\}, \mathcal{L}=\{R_{g,d}(\preceq)\}$.\;
\nl \While{Exists $S\in \mathcal{L}$ \emph{such that} \(\{\mathbf{h}\in \operatorname{SG}(S)\mid \mathbf{h}\preceq \mathbf{m}_{\preceq}(S)\}\neq \emptyset\)}{
   $\mathcal{I}=\emptyset$\;
   \For{$S\in \mathcal{L}$}
                {
       \If{\(\{\mathbf{h}\in \operatorname{SG}(S)\mid \mathbf{h}\preceq \mathbf{m}_{\preceq}(S)\}\neq \emptyset\)}{
          $\mathcal{R}=\emptyset$\;
      \nl   \For{$\mathbf{h}\in \{\mathbf{h}\in \operatorname{SG}(S)\mid \mathbf{h}\preceq \mathbf{m}_{\preceq}(S)\}$}{
             $\mathcal{R}=\mathcal{R}\cup \{S\cup\{\mathbf{h}\}\}$
           } 
         \nl    \For{$T\in \mathcal{R}$}{
                  \For{$\mathbf{x}\in \operatorname{U}_\preceq(T)$ with $\mathbf{x}\neq \mathbf{m}_{\preceq}(S)$}{
                  $\mathcal{I}=\mathcal{I}\cup \{T\setminus\{x\}\}$\;
                      }
                      }
                   }
                 }
                 $\mathcal{S}_{g,d}=\mathcal{S}_{g,d}\cup \mathcal{I}$\;
               \nl  $\mathcal{L}=\mathcal{I}$\;
                }
 \Return $\mathcal{S}_{g,d}$\;  
\end{algorithm}

Let us give a few comments regarding the labeled lines of Algorithm \ref{alg1}.
\begin{itemize}
\item Line 1: the algorithm stops when all computed semigroups are leaves of $\mathcal{T}_{g,\preceq}^{d}$; we gather in $\mathcal{L}$ the semigroups for which we have to do computations at each step.
\item Line 2: for each semigroup $S$ of the current step we have to compute its sons, and for this we need all semigroups $S\cup \{\mathbf{h}\}$ with $\mathbf{h}\in \operatorname{SG}(S)$ and smaller than $\mathbf{m}_{\preceq}(S)$ with respect to $\preceq$.
\item Line 3: for each semigroup $T$ computed in the previous line we compute $T\setminus \{\mathbf{x}\}$ for all $\mathbf{x}\in \operatorname{U}_\preceq(T)$. These are the sons of all semigroup in $\mathcal{L}$, that we collect in $\mathcal{I}$.
\item Line 4: in the next step we have to repeat the same procedure considering the semigroups in $\mathcal{I}$.
\end{itemize}

\begin{example}
Let $\preceq$ be the lexicographic order in $\mathbb{N}^{2}$. Consider the semigroup $R_{3,2}(\preceq)=\mathbb{N}^{2}\setminus \{(0,1),(0,2),(0,3)\}$, we compute its sons in $\mathcal{T}_{3,\preceq}^{2}$. \\
Following the pseudocode we start with $\mathcal{S}_{3,2}=\{R_{3,2}(\preceq)\}, \mathcal{L}=\{R_{3,2}(\preceq)\}$.\\
In line 1 we have $\mathbf{m}_{\preceq}(R_{3,2}(\preceq))=(0,4)$ so $\{\textbf{h}\in\operatorname{SG}(R_{3,2}(\preceq))\mid \textbf{h}\preceq \mathbf{m}_{\preceq}(0,4))\}=\{(0,2),(0,3)\}$. So, following line 2 of the pseudocode, we consider:
\begin{itemize}
\item $T_{(0,2)}=R_{3,2}(\preceq)\cup \{(0,2)\}=\mathbb{N}^{2}\setminus \{(0,1),(0,3)\}$, with $\operatorname{U}_\preceq(T_{(0,2)})=\{(1,0), (1,1),(0,5)\}$,  
\item $T_{(0,3)}=R_{3,2}(\preceq)\cup \{(0,3)\}=\mathbb{N}^{2}\setminus \{(0,1),(0,2)\}$, with $\operatorname{U}_\preceq(T_{(0,3)})=\{(1,0),(1,1),(0,3),(1,2),(0,4),(0,5)\}$  
\end{itemize}
This means that we obtain $\mathcal{R}=\{T_{(0,2)},T_{(0,3)}\}$.
So the sons of $R_{3,2}(\preceq)$, obtained with the procedure in line 3, are the following:
\begin{itemize}
\item $S_{1}=T_{(0,2)}\setminus \{(1,0)\}=\mathbb{N}^{2}\setminus \{(0,1),(0,3),(1,0)\}$,
\item $S_{2}=T_{(0,2)}\setminus \{(1,1)\}=\mathbb{N}^{2}\setminus \{(0,1),(0,3),(1,1)\}$,
\item $S_{3}=T_{(0,2)}\setminus \{(0,5)\}=\mathbb{N}^{2}\setminus \{(0,1),(0,3),(0,5)\}$,
\item $S_{4}=T_{(0,3)}\setminus \{(1,0)\}=\mathbb{N}^{2}\setminus \{(0,1),(0,2),(1,0)\}$,
\item $S_{5}=T_{(0,3)}\setminus \{(1,1)\}=\mathbb{N}^{2}\setminus \{(0,1),(0,2),(1,1)\}$,
\item $S_{6}=T_{(0,3)}\setminus \{(1,2)\}=\mathbb{N}^{2}\setminus \{(0,1),(0,2),(1,2)\}$,
\item $S_{7}=T_{(0,3)}\setminus \{(0,4)\}=\mathbb{N}^{2}\setminus \{(0,1),(0,2),(0,4)\}$,
\item $S_{8}=T_{(0,2)}\setminus \{(0,5)\}=\mathbb{N}^{2}\setminus \{(0,1),(0,2),(0,5)\}$.
\end{itemize}
In particular $\mathcal{I}=\{S_{1}S_{2},S_{3},S_{4},S_{5},S_{6},S_{7},S_{8}\}$ and $\mathcal{S}_{3,2}=\{R_{3,2}(\preceq)\}\cup \mathcal{I}$.\\
At this point (that is line 4) $\mathcal{L}=\mathcal{I}$ and we start again the procedure considering all semigroups in $\mathcal{L}$. If we continue, we obtain all generalized numerical semigroup of genus $g$.\\
Observe that for $S_{1},S_{2},S_{3},S_{6},S_{7},S_{8}$ the set $\{\mathbf{h}\in \operatorname{SG}(S)\mid \mathbf{h}\preceq \mathbf{m}_{\preceq}(S)\}$ is empty, so they are leaves in $\mathcal{T}_{3,\preceq}^{2}$ and in this second step we have to consider the procedures in line 2 and line 3 only for the semigroups $S_{4}$ and $S_{5}$.

The sons of $S_{4}$ are:
\begin{itemize}
\item $S_{9}=S_{4}\cup \{(0,2)\}\setminus \{(1,1)\}=\mathbb{N}^{2}\setminus \{(0,1),(1,0),(1,1)\}$,
\item $S_{10}=S_{4}\cup \{(0,2)\}\setminus \{(2,1)\}=\mathbb{N}^{2}\setminus \{(0,1),(1,0),(2,1)\}$,
\item $S_{11}=S_{4}\cup \{(0,2)\}\setminus \{(1,2)\}=\mathbb{N}^{2}\setminus \{(0,1),(1,0),(1,2)\}$,
\item $S_{12}=S_{4}\cup \{(0,2)\}\setminus \{(2,0)\}=\mathbb{N}^{2}\setminus \{(0,1),(1,0),(2,0)\}$,
\item $S_{13}=S_{4}\cup \{(0,2)\}\setminus \{(1,1)\}=\mathbb{N}^{2}\setminus \{(0,1),(1,0),(3,0)\}$.
\end{itemize}

The sons of $S_{5}$ are:
\begin{itemize}
\item $S_{14}=S_{5}\cup \{(0,2)\}\setminus \{(2,1)\}=\mathbb{N}^{2}\setminus \{(0,1),(1,1),(2,1)\}$.
\end{itemize}
In particular, in line 3 of the second step we obtain $\mathcal{I}=\{S_{9},S_{10},S_{11},S_{12},S_{13},S_{14}\}$. Now the third step starts.

The semigroups $S_{10},S_{11},S_{14}$ are leaves in $\mathcal{T}_{3,\preceq}^{2}$, so for the successive computations we have to consider $S_{9},S_{12},S_{13}$.

The sons of $S_{9}$ are:
\begin{itemize}
\item $S_{15}=S_{9}\cup \{(0,1)\}\setminus \{(2,0)\}=\mathbb{N}^{2}\setminus \{(1,0),(1,1),(2,0)\}$,
\item $S_{16}=S_{9}\cup \{(0,1)\}\setminus \{(3,0)\}=\mathbb{N}^{2}\setminus \{(1,0),(1,1),(3,0)\}$,
\item $S_{17}=S_{9}\cup \{(0,1)\}\setminus \{(1,2)\}=\mathbb{N}^{2}\setminus \{(1,0),(1,1),(1,2)\}$.
\end{itemize}

The sons of $S_{12}$ are:
\begin{itemize}
\item $S_{18}=S_{12}\cup \{(0,1)\}\setminus \{(2,1)\}=\mathbb{N}^{2}\setminus \{(1,0),(2,0),(2,1)\}$,
\item $S_{19}=S_{12}\cup \{(0,1)\}\setminus \{(3,0)\}=\mathbb{N}^{2}\setminus \{(1,0),(2,0),(3,0)\}$,
\item $S_{20}=S_{12}\cup \{(0,1)\}\setminus \{(4,0)\}=\mathbb{N}^{2}\setminus \{(1,0),(2,0),(4,0)\}$,
\item $S_{21}=S_{12}\cup \{(0,1)\}\setminus \{(5,0)\}=\mathbb{N}^{2}\setminus \{(1,0),(2,0),(5,0)\}$.
\end{itemize}

The only son of $S_{13}$ is:
\begin{itemize}
\item $S_{22}=S_{13}\cup \{(0,1)\}\setminus \{(5,0)\}=\mathbb{N}^{2}\setminus \{(1,0),(3,0),(5,0)\}$.
\end{itemize}

At the end of third step we have $\mathcal{L}=\{S_{15},S_{16},S_{17},S_{18},S_{19},S_{20},S_{21},S_{22}\}$. For each one of these semigroups $\mathbf{m}_{\preceq}(S)=(0,1)$ and the set $\{\mathbf{h}\in \operatorname{SG}(S)\mid \mathbf{h}\preceq \mathbf{m}_{\preceq}(S)\}$ is empty, so the procedure ends. So we have obtained exactly 23 different generalized numerical semigroups. We can see a representation of the graph $\mathcal{T}_{3,\preceq}^{2}$ in Figure~\ref{ordinary-tree}.
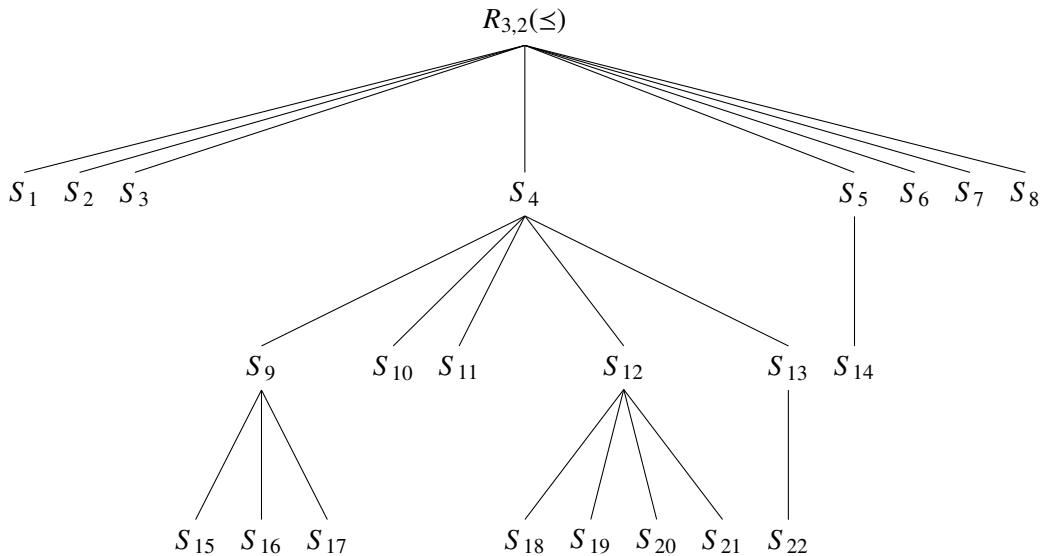
\begin{figure}
\begin{tikzpicture}
\tikzset{level distance=6em}
\Tree
	[.$R_{3,2}(\preceq)$  
		$S_1$ 
		$S_2$ 
		$S_3$
		[.$S_4$ 
			[.$S_9$ $S_{15}$ $S_{16}$ $S_{17}$	] 
			$S_{10}$
			$S_{11}$
			[.$S_{12}$ $S_{18}$ $S_{19}$ $S_{20}$ $S_{21}$ ]
			[.$S_{13}$ $S_{22}$ ]
		] 
		[.$S_5$ $S_{14}$ ] 		
		$S_6$
		$S_7$
		$S_8$
	]
\end{tikzpicture}
\caption{The tree $\mathcal{T}_{3,\preceq}^{2}$, with $\preceq$ the lexicographic order.}
\label{ordinary-tree}
\end{figure}

\end{example}

By Theorem~\ref{generagd} it is possible to produce all generalized numerical semigroups of genus $g$ in $\mathbb{N}^{d}$ starting from $R_{g,d}(\preceq)$. This procedure works as in the previous example, and it avoids  considering all generalized numerical semigroup of genus less then $g$, as happens in the algorithm given in Section~\ref{sec:first-alg}.

\section{Some remarks concerning implementations}\label{sec:implementations}


The relaxed monomial order \texttt{GAP} used by default is the lexicographic order. Our current implementations are not yet prepared to give the user the possibility of choosing another order.
\medskip


There was an important speedup in a preliminary implementation of Algorithm~\ref{alg:gaps-from-generators}
after observing that the cartesian product of elements of the numerical semigoups in the axes consists of elements of the generalized numerical semigroup. Thus one may exclude this set from the set of possible gaps. Note one can use the relatively efficient function that the \texttt{numericalsgps} package provides for computing the set of small elements of a numerical semigroup.
\medskip


We implemented Algorithm~\ref{alg:S-g-d} in a recursive way. It explores the tree in a depth first manner. Note that an exploration in a breadth first manner soon causes memory problems since at least the semigroups of previous genus have to be stored, even when one is just concerned with counting.
By doing this in a depth first manner there is only a small amount of information that needs to be stored (unless we go very deep in the tree), the disadvantage being that one has to decide in advance the genus  one wants to attend. One somehow overcomes this by making some previsions on the time that will be spent based on the time taken for computations of lower genus.   
Furthermore, our implementation includes the computation of the coefficients needed for the polynomial referred in Theorem~\ref{polinomi}. In particular we considered the following GAP code: 

\lstset{language=GAP}
\begin{lstlisting}[frame=lines]
recursiveAffineSons:= function(s,N,L)
    local  H, gens, ml, smallgaps, smallPFs, SmSG, sons, t, E, i;
    
    H:=Gaps(s);
    gens:=Generators(s);
    ml:=Minimum(gens); #multiplicity
    smallgaps:=Filtered(H,j->j<ml);
    smallPFs:=Filtered(smallgaps,g->not ForAny(gens,n -> n+g in H));
    SmSG:=Filtered(smallPFs,g->not(2*g in H));
    #special gaps smaller than the multiplicity
    if not(IsEmpty(SmSG)) then
        sons:=affineSons(s,SmSG);
        for t in sons do
        E:=recursiveAffineSons(t,N,L);
        N:=E[1];
        L:=E[2];
        od;
    fi;
    N:=N+1;
    i:=Rank(H);
    L[i]:=L[i]+1;
    return [N,L];
end;
\end{lstlisting}
We used the previous recursive procedure as a local function, initializing $S:=R_{g,d}(\preceq)$ (while $\preceq$ is the lexicographic order), \texttt{N} to zero and \texttt{L} to be the zero list with $d$ entries. In fact, the code works in such a way that at the end of all recursion steps the variable \texttt{N} will contain the number of all generalized numerical semigroups with given genus, and the $i$th element of the list \texttt{L} will contain the number of generalized numerical semigroups whose set of gaps generates a vector space of dimension $i$. These variables are updated for the first time when the current semigroup has no sons and successively each time going up in the recursion. The local function \texttt{affineSons} computes the sons of the current semigroup. This function requires as input also the set of special gaps smaller than the multiplicity of the current semigroup, in order to avoid to compute many times the set \texttt{SmSG}.

Algorithm~\ref{alg:S-g-d} can be used also to compute the number of numerical semigroups of given genus. The present implementation in the package \texttt{numericalsgps} of the function \texttt{NumericalSemigroupsWithGenus} designed for this uses the ``standard tree'' and it is faster than the implementation we have for our algorithm. This is maybe due to the choice of a more appropriate encoding, which in the case of the package is presently based on Apéry sets. Even if it can be introduced also for affine semigroups (see \cite{2019arXiv190311028G}), the Apéry set is in general an infinite set for generalized numerical semigroups, while it is finite for numerical semigroups. For this and other reasons, in what concerns the aim of Algorithm~\ref{alg:S-g-d}, at this moment we do not know if it is not possible to work with Apéry sets (or something related to it) in generalized numerical semigroups in the same way as for numerical semigroup.  

In order to have sufficient examples to test our implementations at the time this work was being prepared we implemented a method to generate a pseudo-random generalized numerical semigroup of a genus given: from a semigroup of genus \(g-1\) one obtains one of genus \(g\) by removing one minimal generator at random (obtained using the \texttt{GAP} function \texttt{RandomList}).

\section{Some computational results}\label{sec:experiments}
Let, as above, $N_{g,d}$ be the number of all generalized numerical semigroups in $\mathbb{N}^{d}$ of genus $g$. If $d=1$, it has been proved that the sequence $\{N_{g,1}\}$ has a Fibonacci-like behaviour (see \cite{zhai2013fibonacci}), that is, $\lim_{g\rightarrow \infty}N_{g,1}/N_{g-1,1}=\phi$ where $\phi$ is the golden ratio, as conjectured by M. Bras-Am\'oros in \cite{bras2008fibonacci}. That conjecture was justified by the computation of the values $N_{g,1}$ for $g=1$ up to $g=50$. A natural question is whether the sequence $\{N_{g,d}\}$ with $d>1$ has a particular behaviour. 
In order to shed some light on it we did several computations, presented in the tables in this section.

\begin{table}
\caption{Computational results for $N_{g,2}$}
\begin{tabular}{ccccc}
\toprule
$g$ &  $N_{g,2}$ & $N_{g-1,2}+N_{g-2,2}$ & $\frac{N_{g-1,2}+N_{g-2,2}}{N_{g,2}}$ & $\frac{N_{g,2}}{N_{g-1,2}}$ \\
\midrule
0	& 1	&   & 	&	   \\
1	& 2	&	&   &	2  \\

2 &	7	& 3	& 0,4285714286	& 3,5  \\
3 &	23	& 9  &	0,3913043478	&  3,2857142857\\
4 &	71	& 30 &	0,4225352113  & 	3,0869565217\\
5 &	210	 & 94  &  0,4476190476  &	2,9577464789\\
6 &	638	 & 281 &  0,4404388715  &	3,0380952381\\
7 &	1894 &	848  &	0,4477296727  &	2,9686520376\\
8 &	5570 &	2532 &	0,4545780969  &	2,9408658923\\
9 &	16220 &	7464 &	0,4601726264 &	2,9120287253\\
10 & 46898 & 21790 & 0,4646253572 &	2,8913686806\\
11 & 134856 & 63118 &	0,4680399834 &	2,8755170796\\
12 & 386354 &	181754 &	0,4704338508 &	2,8649374147\\
13 & 1102980 &	521210 &	0,4725470997 &	2,8548429678\\
14 & 3137592 &	1489334 &	0,4746742088 &	2,8446499483\\
15 & 8892740 &	4240572 &	0,4768577514 &	2,8342563342\\
16	& 25114649	& 12030332	& 0,4790165294	& 2,8241744389\\
17	& 70686370	& 34007389	& 0,4811024954	& 2,8145473982\\
18 & 198319427 & 95801019	& 0,4830642184	& 2,8056247194\\
19 & 554813870	& 269005797	 & 0,4848577362	 & 2,797577012\\
20	& 1548231268 & 753133297 & 0,4864475434	& 2,7905417505\\
21	& 4310814033 & 2103045138	& 0,4878533664	& 2,7843476114\\

\bottomrule
\end{tabular}
\label{tabella1}
\end{table}

\begin{remark}\rm
The values presented in Table~\ref{tabella1} for \(g\in\{19,20,21\}\) are new. The others coincide with those presented in \cite[Table~3]{garcia2018extension}, except for \(g=18\), which seems to be wrong in that paper.
Our suspicion is based on the fact that that the number \(n_{18}/n_{17}\) in the second column of the table in their paper, which corresponds to the number \(N_{18,2}/N_{17,2 }\) in the forth column of Table~\ref{tabella1}, is smaller than expected, from the regularity exhibited by the sequence of values in that column.
\end{remark}

\medskip

We compute some values of $N_{g,d}^{(r)}$ in order to build the polynomial $F_{g}(d)$ of Theorem~\ref{polinomi}. Some polynomials are given in \cite{failla2016algorithms}, exactly:

\begin{itemize}
\item $F_{1}(d)=d$           
\item $F_{2}(d)=\frac{3}{2}d^{2}+\frac{1}{2}d$               
\item $F_{3}(d)=\frac{5}{3}d^{3}+\frac{5}{2}d^{2}-\frac{1}{6}d$                             
\end{itemize}

Other values of $N_{g,d}^{(r)}$ have been computed, in particular:

\[
\begin{array}{llllll}

N_{4,2}^{(2)}=57, & N_{5,2}^{(2)}=186, & N_{6,2}^{(2)}=592, & N_{7,2}^{(2)}=1816, &  N_{8,2}^{(2)}=5436, & N_{9,2}^{(2)}=15984,\\
N_{4,3}^{(3)}=100, & N_{5,3}^{(3)}=621,&  N_{6,3}^{(3)}=3230, & N_{7,3}^{(3)}=15371, &  N_{8,3}^{(3)}=69333, & N_{9,3}^{(3)}=301425,\\
N_{4,4}^{(4)}=41,  & N_{5,4}^{(4)}=672, & N_{6,4}^{(4)}=6321, & N_{7,4}^{(4)}=47432, &  N_{8,4}^{(4)}=315393, &  N_{9,4}^{(4)}=1945238, \\
 & N_{5,5}^{(5)}=196, & N_{6,5}^{(5)}=4745, & N_{7,5}^{(5)}=63205, & N_{8,5}^{(5)}=648115, & N_{9,5}^{(5)}=5742670, \\
 & & N_{6,6}^{(6)}=1057, & N_{7,6}^{(6)}=35480, & N_{8,6}^{(6)}=637312, & N_{9,6}^{(6)}=8584915, \\
 & & &  N_{7,7}^{(7)}=6322, & N_{8,7}^{(7)}= 281099, & N_{9,7}^{(7)}=6563802, \\ 
 & & & &  N_{8,8}^{(8)}=41393, & N_{9,8}^{(8)}=2355792, \\
 & & & & & N_{9,9}^{(9)}=293608.\\
\end{array}
\]

In \cite{bras2008fibonacci} one can see the following known values:
$$N_{4,1}^{(1)}=7,\ \ N_{5,1}^{(1)}=12,\ \ N_{6,1}^{(1)}=23,\ \ N_{7,1}^{(1)}=39,\ \ N_{8,1}^{(1)}=67,\ \ N_{9,1}^{(1)}=118.$$
so the following polynomials can be expressed :

\begin{itemize}
\item $F_{4}(d)=\frac{41}{24}d^{4}+\frac{77}{12}d^{3}-\frac{65}{24}d^{2}+\frac{19}{12}d$,                                 
\item $F_{5}(d)=\frac{49}{30}d^{5}+\frac{35}{3}d^{4}-\frac{22}{3}d^{3}+\frac{53}{6}d^{2}-\frac{14}{5}d$,                       
\item $F_{6}(d)=\frac{1057}{720}d^{6}+\frac{841}{48}d^{5}-\frac{1045}{144}d^{4}+\frac{563}{48}d^{3}+\frac{148}{45}d^{2}-\frac{15}{4}d$,

\item $F_{7}(d)=\frac{3161}{2520}d^{7}+\frac{8257}{360}d^{6}+\frac{127}{18}d^{5}-\frac{1735}{72}d^{4}+\frac{31757}{360}d^{3}-\frac{15091}{180}d^{2}+\frac{577}{21}d$,

\item $F_{8}(d)=\frac{41393}{40320}d^{8}+\frac{38921}{1440}d^{7}+\frac{128099}{2880}d^{6}-\frac{9227}{62}d^{5}+\frac{1875151}{5760}d^{4}-\frac{467041}{1440}d^{3}+\frac{1234271}{10080}d^{2}-\frac{25}{24}d$,

\item $F_{9}(d)=\frac{5243}{6480}d^{9}+\frac{14767}{504}d^{8}+\frac{58399}{540}d^{7}-\frac{203159}{720}d^{6}+\frac{301811}{540}d^{5}-\frac{24961}{144}d^{4}-\frac{3909443}{6480}d^{3}+\frac{536093}{840}d^{2}-\frac{1427}{9}d$.
\end{itemize}

\medskip

\begin{table}[h!]
\caption{Some values of $N_{g,d}$ for $d>2$.\label{tabella2}}
\begin{tabular}{ccccccccc}
\toprule
$g$ &  $N_{g,3}$ & $N_{g,4}$ & $N_{g,5}$ & $N_{g,6}$ & $N_{g,7}$ & $N_{g,8}$ & $N_{g,9}$ & $N_{g,10}$ \\
\midrule

 1 & 3  &  4  &	   5 &	  6 &   7 &	  8 &  	 9 & 10 \\
2 &  15 &  26 &	  40 &   57 &  77 &	100 &  126 & 155\\
3 &  67 & 146 &	 270 &  449 & 693 & 1012 & 1416 & 1915 \\	
4 & 292 & 811 &    1810 &	3512 &	6181 &	10122&	15681&	23245 \\
5 & 1215 &	4320&	11686 &	26538&	53361&	98096&	168336&	273522 \\
6 & 5075 &	22885&	74685 &	197960&	453922&	935426&	1775943&  3159590\\
7 & 20936 &	119968&	472430&	1461084& 3818501& 8815672& 18505065&	36024450\\
8 & 85842 &	625609 & 2973105&	10725499&	31932733&	82542263&  191448588 &	407552845\\
9 & 349731 & 3247314 &  18643540 & 78488473 & 266223972 & 770328304 & 1973498062 & 4591979390  \\
10 & 1418323  & 16800886      &       & & & & & \\
11 & 5731710  & 86739337      &       & & & & & \\
12 & 23100916 &  447283982     &       & & & & & \\
13 & 92882954 & 2304942650      &       & & & & & \\
14 & 372648740 &       &       & & & & & \\
\bottomrule
\end{tabular}
\end{table}

\begin{remark} \rm
The values of Table 2 up to dimension 9 have been effectively computed. Of course, they coincide with the values given by the polynomials $F_{g}(d)$ mentioned above, in the cases this value exists.
As a consequence of our computations we observe that we are able to compute the number of generalized numerical semigroups with genus up to 9 for any dimension (using the polynomial given by Theorem~\ref{polinomi}). The last column of the table contains the values for dimension 10.

All the values presented are new, except for those of dimension 3, up to genus 13. These already appear in \cite[Table 3]{garcia2018extension}, and our computations confirm them.

\end{remark}

\medskip

We end the paper by doing some comparisons between $(N_{g,1})^{2}$ and  $N_{g,2}$, for $g$ not greater than $21$. Recall that, up to $g=21$ the values $N_{g,2}$ appear for the first time in this work, while the values of $N_{g,1}$ for the same values of $g$ are well known. The comparisons are registered in Table~\ref{table3}.

\begin{table}[h!]
\caption{Values to estimate $(N_{g,1})^{2}/N_{g,2}$.\label{table3}}
\begin{tabular}{ccccc}
\toprule
$g$ &  $N_{g,1}$ & $(N_{g,1})^{2}$ & $N_{g,2}$ & $\frac{(N_{g,1})^{2}}{N_{g,2}}$ \\
\midrule
1 &	1	&	1	& 2	 & 0,5\\
2 &	2	&	4	& 7 &	0,5714285714\\
3 &	4	&	16	& 23 & 0,6956521739\\
4 &	7	&	49	& 71 & 0,6901408451\\
5 &	12 & 144 &	210 &	0,6857142857\\
6 &	23 & 529 &	638	& 0,829153605\\
7 &	39 & 1521 &	1894 & 0,803062302\\
8 &	67 & 4489 &	5570 &	0,8059245961\\
9 &	118 & 13924 & 16220 &	0,8584463625\\
10 & 204 &	41616 &	46898 &	0,8873725958\\
11 & 343 &	117649 & 134856 & 0,872404639\\
12 & 592 &	350464 & 386354 & 0,9071059184\\
13	& 1001 & 1002001 & 1102980 & 0,9084489293\\
14	 & 1693 & 2866249 &	3137592 & 0,9135187112\\
15	& 2857	& 8162449 &	8892740 & 0,9178778419\\
16	& 4806 & 23097636 &	25114649 &	0,9196877886\\
17	 & 8045 & 64722025 & 70686370 &	0,9156224177\\
18	& 13467 & 181360089 & 198319427 & 0,9144847368\\
19	 & 22464 & 504631296 &	554813870 &	0,9095506138\\
20 & 37396 & 1398460816 & 1548231268 & 0,9032635142\\
21 & 62194 & 3868093636 & 4310814033 & 0,8973000474\\
\bottomrule
\end{tabular}
\end{table}

Note that for all the $g$'s in the table we have that $(N_{g,1})^{2}<N_{g,2}$. Also, note that the ratio $(N_{g,1})^{2}/N_{g,2}$ is increasing from $g=1$ to $g=16$ and starts then to be decreasing until reaching $g=21$. What happens next is not known, since $N_{g,2}$ has not been computed for $g\ge 22$.

It is easy to check using the data presented in Table~\ref{tabella2} that having decreasing ratios is no longer true when $d$ is bigger than $2$. In fact for $d\ge 3$, one quickly has $(N_{g,1})^{d}>N_{g,d}$, by letting $g$ grow. Moreover the ratio $(N_{g,1})^{d}/N_{g,d}$ seems to grow quickly, as $g$ grows.

Let us return to the case $d=2$. Some questions arise:

\begin{enumerate}
\item Is it true $(N_{g,1})^{2}<N_{g,2}$ for every $g\in \mathbb{N}$?
\item Does $\lim\limits_{g\rightarrow \infty} \frac{(N_{g,1})^{2}}{N_{g,2}}$ exist, and is nonzero?  
\end{enumerate}

If the second question has a positive answer, one has that $\lim\limits_{g\rightarrow \infty} \frac{N_{g,2}}{N_{g-1,2}}=\phi^{2}$, where $\phi$ is the golden ratio, as can easily be shown.

If the first question has a positive answer and the ratio $\frac{(N_{g,1})^{2}}{N_{g,2}}$ is decreases from a certain value on (as our experiments suggest), then it is true also that $\lim\limits_{g\rightarrow \infty} \frac{(N_{g,1})^{2}}{N_{g,2}}=k$ with $0<k<1$.

\medskip
The numerical data collected is for the moment not sufficient to give us too much confidence on our observations or to state any nice conjecture (especially for the case $d\geq 3$), as happened with Bras-Amorós when she realized that the sequence of the number of numerical semigroups counted by genus had a Fibonacci-like behaviour. Since the algorithms and the implementations have still space to be improved, obtaining more numerical data may be seen as an active goal. 


\subsection*{Acknowledgements}
The first and second authors want to thank the amazing hospitality found in the Instituto de Matemáticas de la Universidad de Granada (IEMath-GR). 
Most of the results in this paper where obtained during their stay there.

The authors also thank the Centro de Servicios de Informática y Redes de Comunicaciones (CSIRC), Universidad de Granada, for providing the computing time, specially Rafael Arco Arredondo for installing this package and the extra software needed in alhambra.ugr.es, and Santiago Melchor Ferrer for helping in job submission to the cluster.

\bibliographystyle{plain}
\bibliography{biblio}



\end{document}